\newlength{\dhatheight}
\newtheorem{theorem}{Theorem}[section]
\newtheorem{lemma}[theorem]{Lemma}
\newtheorem{remark}[theorem]{Remark}
\newtheorem{corollary}[theorem]{Corollary}
\newtheorem{definition}[theorem]{Definition}
\newcommand{\N}{{\mathbb N}}
\newcommand{\Z}{{\mathbb Z}}
\newcommand{\ra}{\rightarrow}
\newcommand{\ras}{{\stackrel{~~*}{\ra}}}
\newcommand{\ew}{\lambda}  
\newcommand{\gi}{1}  
\newcommand{\nf}{{\mathcal{N}}}
\newcommand{\ga}{\Gamma}
\newcommand\vece{{\vec E}}
\newcommand\vecp{{\vec P}}
\newcommand{\ega}{e_{g,a}}
\newcommand{\sym}{inverse-closed}
\newcommand{\ff}{\Phi} 
\newcommand{\sff}{\phi} 
\newcommand{\ttt}{{\mathcal T}} 
\newcommand{\alg}{graph(\sff)} %
\newcommand{\astkbl}{{algorithmically stackable}}
\newcommand{\wff}{flow function}
\newcommand{\wstf}{stacking function}
\newcommand{\lbl}{{\mathsf{label}}}  
\newcommand{\rep}{\rho}
\newcommand{\path}{{\mathsf{path}}}  
\newcommand{\sr}{synchronously regular}
\newcommand{\pad}{\mu}
\newcommand{\prs}{prefix-rewriting system}
\newcommand{\cpr}{complete prefix-rewriting system}
\newcommand{\crs}{complete rewriting system}
\newcommand{\la}{\Lambda}
\newcommand{\rr}{\pi}
\newcommand{\last}{{\mathsf{last}}}
\newcommand{\cll}{{\mathcal{L}}}
\newcommand{\redl}{{\mathsf{dcl}}}  
\newcommand{\dol}{{\mathbb{I}}}  
\newcommand{\cgt}{\succ}   
\newcommand{\suf}{\mathsf{suf}}
\newcommand*{\medcup}{\mathbin{\scalebox{1.5}{\ensuremath{\cup}}}}
\newcommand{\es}{\mathsf{expsum}}
\newcommand{\hb}{{\widehat{B}}}
\newcommand{\hq}{{\widehat{\nf_Q}}}
\begin{document}
\title[Homology and closure properties of autostackable groups]
{Homology and closure properties of autostackable groups}

\author[M.~Brittenham]{Mark Brittenham}
\address{Department of Mathematics\\
        University of Nebraska\\
         Lincoln NE 68588-0130, USA}
\email{mbrittenham2@math.unl.edu}

\author[S.~Hermiller]{Susan Hermiller}
\address{Department of Mathematics\\
        University of Nebraska\\
         Lincoln NE 68588-0130, USA}
\email{smh@math.unl.edu}

\author[A.~Johnson]{Ashley Johnson}
\address{Department of Mathematics\\
        University of North Alabama\\
         Florence AL, 35632, USA}
\email{ajohnson18@una.edu}

\thanks{2010 {\em Mathematics Subject Classification}. 20F65; 20F10, 68Q42}

\begin{abstract}
Autostackability for finitely presented groups is
a topological property of the Cayley graph combined
with formal language theoretic restrictions, that
implies solvability of the word problem.  
The class of autostackable groups is known to
include all
asynchronously automatic groups with respect
to a prefix-closed normal form set, and all groups admitting
finite complete rewriting systems.  Although groups
in the latter two classes all satisfy the homological
finiteness condition $FP_\infty$, we show  that
the class of autostackable groups
includes a group that is not of type $FP_3$.
We also show that the class of autostackable groups
is closed under graph products and extensions.
\end{abstract}

\maketitle


\section{Introduction}\label{sec:intro}


Autostackable groups are
an extension of the notions of automatic
groups and groups with finite complete
rewriting systems, introduced by Holt and
the first two authors in~\cite{AS}.
An autostackable structure for a finitely generated
group implies a finite presentation,
a solution to the word problem, 
a recursive algorithm for building van Kampen
diagrams, and 
tame combability~\cite{britherm},~\cite{brithermtame}.
Moreover, in contrast to automatic groups,
the class of autostackable groups includes
all fundamental groups of 3-manifolds with 
a uniform geometry~\cite{AS}.

Autostackability is 
a topological property of the Cayley 
graph, together with a language theoretic
restriction on this property.  More specifically,
let $G$ be a group with a finite
inverse-closed generating set $A$, and   
let $\ga=\ga(G,A)$ be the associated
Cayley graph.  Denote the set of directed edges
in $\ga$ by $\vece$, and the set of directed
edge paths by $\vecp$.
For each $g \in G$ and $a \in A$,
let $\ega$ denote the directed edge 
with initial vertex $g$, terminal vertex $ga$,
and label $a$;
we view the two directed edges $e_{g,a}$ 
and $e_{ga,a^{-1}}$  to
have a single underlying undirected edge in $\ga$.

A {\em flow function}
associated to a maximal tree $\ttt$ in $\ga$ is a
function $\ff:\vece \ra \vecp$ 
satisfying the properties that: 
\begin{itemize}
\item[(F1)] For each edge $e \in \vece$,
the path $\ff(e)$ has the same initial and terminal
vertices as $e$.
\item[(F2d)] If the undirected edge underlying $e$ 
lies in the tree $\ttt$, then $\ff(e)=e$.
\item[(F2r)]  The transitive closure 
$<_\ff$ of the relation $<$ on
$\vec E$ defined by 
\begin{itemize}
\item[]
$e' < e$ whenever $e'$ lies on the path $\ff(e)$
and the undirected edges underlying both
$e$ and $e'$ do not lie in $\ttt$,
\end{itemize}
is a well-founded strict
partial ordering.
\end{itemize}
The flow function is {\em bounded} if there is
a constant $k$ such that for all $e \in \vec E$,
the path $\ff(e)$ has length at most $k$.
That is, the map $\ff$ fixes the edges lying in the tree $T$
and describes a ``flow'' of the
non-tree edges toward the tree (or toward the basepoint);
starting from a non-tree edge and
iterating this function finitely many times results
in a path in the tree.

In order to place a language theoretic restriction
on $\ff$, we use functions that convert between
paths and words. Define $\lbl:\vecp \ra A^*$ to be 
the function that maps each directed path
to the word labeling that path.
For each element $g \in G$, let $y_g$ denote the
label of the unique geodesic (i.e., without backtracking)
path in the maximal tree $\ttt$ from the identity element
$\gi$ of $G$ to $g$, and let
$\nf=\nf_\ttt:=\{y_g \mid g \in G\}$ denote the 
set of these (unique) normal forms.
Define $\path:\nf \times A^* \ra \vecp$ by
$\path(y_g,w) :=$ the path in $\ga$ that 
starts at $g$ and is labeled by $w$.

\begin{definition}\cite{britherm,AS}\label{def:autostackable}
Let $G$ be a group with a finite inverse-closed generating
set $A$.
\begin{enumerate}
\item The group $G$ is {\em stackable} over $A$ if there is a bounded
flow function on a maximal tree in the associated
Cayley graph.  
\item The group $G$ is {\em algorithmically stackable}
over $A$ if $G$ admits a bounded flow function $\ff$ for which 
the graph 
$$
\alg:=\{(y_g,a,\lbl(\ff(\path(y_g,a)))) \mid g \in G, a \in A\}
$$
of the {\em stacking map} 
$\sff:=\lbl \circ \ff \circ \path$ is
computable.
\item The group $G$ is {\em autostackable}
over $A$ if $G$ has a bounded flow function $\ff$
for which the graph 
of the associated stacking map is  \sr.
\end{enumerate}
\end{definition}

A stackable group $G$ over a finite generating
set $A$ is finitely presented, with finite
presentation 
$R_\ff = \langle A \mid 
\{\sff(y,a) = a \mid y \in \nf_\ttt, a \in A\}\rangle$
associated to the \wff\ $\ff$.  The set $\nf_\ttt$
is a prefix-closed set of normal forms for $G$.
A bounded flow function is equivalent to
a bounded complete prefix-rewriting system
for $G$ over $A$, for which the irreducible
words are exactly the elements of the set
$\nf_\ttt$.  
(See Section~\ref{subsec:rs} for
definitions of rewriting and prefix-rewriting
systems.) 
Moreover, a group is
autostackable if and only if it admits
a \sr\ bounded complete \prs. 
Algorithmic stackability (and hence
also autostackability) implies a solution of the
word problem; the set of rules
of the associated prefix-rewriting system are
computable, and give an algorithm to
rewrite any word to the normal form representing
the same group element. 
The class of autostackable groups includes
all groups that are asynchronously automatic
with respect to a prefix-closed set of
(unique) normal forms, and all groups that 
admit a finite complete rewriting system.
The class of stackable groups also includes
all almost convex groups.
For proofs of these and other results
on autostackable groups, 
see~\cite{britherm} and~\cite{AS}.

Section~\ref{sec:notation} of this paper 
contains notation and definitions
used throughout the paper, 
including background on language theory.

In Section~\ref{sec:closure}, we show that the
classes of autostackable, stackable, and \astkbl\ groups
are all closed under taking graph products (including
free and direct products), extensions, and
finite index supergroups 
(i.e., groups containing a finite index subgroup in
the class).  For the two properties that 
motivated autostackability, we note that the
class of groups admitting a finite complete rewriting
system is closed under all three of these 
constructions~(\cite{GP},~\cite{hermeierartin},~\cite{grovessmith}),
but the class of automatic groups is only closed under
graph products and finite index supergroups~(\cite{GP},~\cite{bgss});
in particular, a nilpotent group  that is not virtually
abelian is not automatic~\cite[Theorem~8.2.8]{echlpt}.
The closure results in Section~\ref{sec:closure} show that any
extension of a automatic group by another automatic
group, such that the normal forms in both
cases are prefix-closed (and unique), is autostackable.

In Section~\ref{sec:stallings}
we show that the class of autostackable groups
includes groups with a wider range of homological
finiteness properties than those of automatic groups
or groups with finite complete rewriting systems.
A group $G$ has homological type $FP_n$ if there 
is a partial projective resolution of length $n$, by finitely
generated $\Z G$-modules, of
the module $\Z$ (with trivial $G$ action).
In the case that $G$ has type $FP_n$ for all
$n \in \N$, then $G$ is said to be of type $FP_\infty$.
Alonso~\cite{alonso} has shown that all groups
that admit a bounded combing, including all automatic
groups, have type $FP_\infty$.
Groups with finite complete rewriting systems also 
are of type $FP_\infty$; this has been shown with
a variety of proofs in papers by
Anick~\cite{Anick}, Brown~\cite{BrownRS}, Groves~\cite{GrovesRS}, 
Farkas~\cite{Farkas}, Kobayashi~\cite{Kobayashi}, and 
Lafont~\cite{Lafont};
see Cohen's survey~\cite{CohenSurvey} for more details.

Stallings~\cite{Stallings} showed that the group
$$
G := \langle a,b,c,d,s \mid [a,c] = [a,d] = [b,c] = [b,d] = 1, 
[s, ab^{-1}] = [s, ac^{-1}] = [s, ad^{-1}] = 1\rangle
$$
does not have the finiteness property $FP_3$.
The results above show that this group cannot be
automatic, nor can it admit a finite complete
rewriting system.  
Moreover, Elder and the second author have
shown that this group does not satisfy the almost convex
property~\cite{elder}, nor the weaker 
minimally almost convex property~\cite{elderherm}, on 
this generating set.
However, in Section~\ref{sec:stallings},
we show in Theorem~\ref{thm:stallings}
that this group is autostackable. 

\smallskip

\noindent{\bf Corollary~\ref{cor:autostknotfp3}.} {\em
There is an autostackable group that does not satisfy
the homological finiteness condition $FP_3$.  }

\smallskip

\noindent Stallings' group also provides an
example of a group that cannot have a finite complete
rewriting system, but does
admit a bounded complete prefix-rewriting system.


\section{Notation and background} \label{sec:notation}


Throughout this paper, let $G$ be a group
with a finite inverse-closed generating set $A$.
Also throughout the paper we assume that no element of a
generating set 
represents the identity element of the group,
and no two elements of a generating set
represent the same element of the group.

A set $\nf$ of {\em normal forms} for $G$ over $A$ is a 
subset of 
$A^*$ such that the restriction of the
canonical surjection $\rep: A^* \ra G$
to $\nf$ is a bijection.
As in Section~\ref{sec:intro}, the symbol $y_g$ denotes
the normal form for $g \in G$.  By slight abuse
of notation, we use the symbol $y_w$ to denote the
normal form for $\rep(w)$ whenever $w \in A^*$.

Let $\gi$ denote the identity of $G$, and let
$\ew$ denote the empty word in $A^*$.
For a word $w \in A^*$, we write $w^{-1}$ for the 
formal inverse of $w$ in $A^*$, and let $l(w)$ denote
the length of the word $w$.
For words $v,w \in A^*$, we write $v=w$ if $v$
and $w$ are the same word in $A^*$, and write $v=_G w$ if
$v$ and $w$ represent the same element of $G$.

Given a word $w \in A^*$, let $\last(w)$ denote
the last letter in $A$ of the word $w$; in the case that
$w = \ew$ contains no letters, then we let $\last(w) := \ew$.
For any subset $Z \subset A$, we use $\suf_Z(w)$,
to denote the maximal suffix 
of $w$ that lies
in $Z^*$; here $\suf_Z(w):=\ew$ 
if $w$ does not end with a letter in $Z$.

Let $\ga$ be the Cayley graph of $G$ with
respect to $A$.
If $\nf$ is a prefix-closed set of normal
forms for $G$ over $A$, then $\nf$ determines a
maximal tree $\ttt$ in $\ga$, namely the 
set of all (undirected) edges underlying
edge paths in $\ga$ starting at the vertex $\gi$
and labeled by words in $\nf$.


\subsection{Formal language theory} \label{subsec:language}


A {\em language} over a finite set $A$ is 
a subset of the set $A^*$ of all finite words over $A$.
The set $A^+$ denotes the language $A^* \setminus \{\ew\}$
of all nonempty words over $A$.

The {\em regular} languages over $A$ are the
subsets of $A^*$ obtained from the finite subsets
of $A^*$ using finitely many operations from among
union, intersection, complement, concatenation
($S \cdot T := \{vw \mid v \in S$ and $w \in T\}$),
and Kleene star ($S^0:=\{\ew\}$, $S^n := S^{n-1} \cdot S$ and
$S^* := \cup_{n=0}^\infty S^n$).  
The class of regular
languages is closed under both image and
preimage via monoid homomorphisms
(see, for example,~\cite[Theorem~3.5]{hu}).
The class of regular sets is also closed under quotients
(\cite[Theorem~3.6]{hu}); we write out a special case of 
this in the following lemma for use in later sections of this
paper.

\begin{lemma}~\cite[Theorem~3.6]{hu}\label{lem:peel}
If $A$ is a finite set, $L \subseteq A^*$ is a regular
language, and $w \in A^*$,
then the quotient language
$L/w := \{x \in A^* \mid xw \in L\}$ is also a 
regular language.
\end{lemma}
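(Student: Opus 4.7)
The plan is to give the standard finite-automaton construction for right quotients by a fixed word. Since $L$ is regular, by the Myhill--Nerode theorem there is a deterministic finite automaton $M = (Q, A, \delta, q_0, F)$ accepting $L$, where $\delta : Q \times A \to Q$ is the transition function. Extend $\delta$ in the usual way to $\delta^* : Q \times A^* \to Q$, by setting $\delta^*(q, \ew) := q$ and $\delta^*(q, ua) := \delta(\delta^*(q, u), a)$ for $u \in A^*$ and $a \in A$.

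Next I would construct a DFA $M'$ accepting $L/w$ by keeping everything in $M$ the same except for the accept set: put $M' := (Q, A, \delta, q_0, F')$, where
$$
F' := \{q \in Q \mid \delta^*(q, w) \in F\}.
$$
Since $w$ is a fixed finite word and $Q$ is finite, $F'$ is well-defined and finite (in fact, computable from $M$ and $w$).

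The verification is then the short chain of equivalences
$$
x \in L/w \iff xw \in L \iff \delta^*(q_0, xw) \in F \iff \delta^*(\delta^*(q_0,x), w) \in F \iff \delta^*(q_0, x) \in F',
$$
which says exactly that $x$ is accepted by $M'$. Hence $L/w$ is recognized by a DFA and is therefore regular.

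There is no real obstacle here; this is a textbook construction, and the lemma is stated in the paper precisely as a special case of the closure of regular languages under quotients (\cite[Theorem~3.6]{hu}). The only place where one has to be a bit careful is in noting that the construction is uniform in $w$: the states and transitions of $M$ are reused, and only the accepting set changes, which is what makes the lemma quantitative enough to be useful later in the paper.
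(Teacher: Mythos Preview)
Your argument is correct and is exactly the standard DFA construction for right quotient by a single word. Note, however, that the paper does not give its own proof of this lemma at all; it simply cites it as a special case of \cite[Theorem~3.6]{hu}, so there is no ``paper's proof'' to compare against. What you have written is essentially the proof one finds in that reference, so your proposal is fully adequate.
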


Let $\$$ be a symbol not contained in $A$.
The set $A_n:=(A \cup \{\$\})^n \setminus \{(\$,...,\$)\}$
is the {\em padded $n$-tuple alphabet} derived from $A$.
For any $n$-tuple of words $u=(u_1,...,u_n) \in (A^*)^n$,
write $u_i=a_{i,1} \cdots a_{i,j_i}$ with each 
$a_{i,m} \in A$ for $1 \le i \le n$ and $1 \le m \le j_i$.
Let $M:=\max\{j_1,...,j_n\}$, and define
$\tilde u_i:=u_i\$^{M-j_i}$, so that each of 
$\tilde u_1$, ..., $\tilde u_n$ 
has length $M$.  That is, $\tilde u_i$ is
a word over the alphabet $(A \cup \{\$\})^*$, and
we can write $\tilde u_i = c_{i,1} \cdots c_{i,M}$
with each $c_{i,m} \in A \cup \{\$\}$.
The word $\pad(u):=(c_{1,1},...,c_{n,1}) \cdots
(c_{1,M},...,c_{n,M})$ is the {\em padded word}
over the alphabet $A_n$ induced by the $n$-tuple 
$(u_1,...,u_n)$ in $(A^*)^n$.  

A subset $L \subseteq (A^*)^n$ is called
a {\em \sr\ language} if the {\em padded extension}
set $\pad(L) := \{\pad(u) \mid u \in L\}$ of
padded words associated to the elements of $L$ is
a regular language over the alphabet $A_n$.
Closure of the class of \sr\ languages under
finite unions and intersections follows from 
these closure properties for regular languages.
The following two lemmas on \sr\ languages 
will also be used in later sections.

\begin{lemma}~\cite[Lemma~2.3]{AS}\label{lem:product}
If $L_1,...,L_n$ are regular languages over $A$,
then their Cartesian product 
$L_1 \times \cdots \times L_n \subseteq (A^*)^n$
is \sr.
\end{lemma}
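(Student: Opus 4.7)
The plan is to build, for each index $i$, a DFA over the extended alphabet $A \cup \{\$\}$ that recognizes the padded language $L_i \cdot \$^*$, and then take a straightforward $n$-fold product over the alphabet $A_n$ to obtain an automaton whose accepted language is exactly $\pad(L_1 \times \cdots \times L_n)$.

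First I would fix DFAs $M_i = (Q_i, A, \delta_i, s_i, F_i)$ with $L(M_i) = L_i$. From each I would construct $M_i'$ with state set $Q_i \cup \{p_i\}$ for a new ``padding state'' $p_i$, alphabet $A \cup \{\$\}$, accept set $F_i \cup \{p_i\}$, start state $s_i$, and transition function extending $\delta_i$ by $\delta_i'(q, \$) = p_i$ for $q \in F_i$ and $\delta_i'(p_i, \$) = p_i$, with $\delta_i'$ undefined on $\$$ from states in $Q_i \setminus F_i$ and on letters of $A$ from $p_i$. A direct check shows $L(M_i') = \{u_i \$^{k} \mid u_i \in L_i,\ k \geq 0\}$. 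I would then form the product automaton $M$ over $A_n$ with state set $\prod_i (Q_i \cup \{p_i\})$, start state $(s_1, \ldots, s_n)$, accept set $\prod_i (F_i \cup \{p_i\})$, and transitions acting coordinatewise by the $\delta_i'$ (with undefined result if any coordinate is undefined). By construction, a word $w \in A_n^*$ is accepted by $M$ iff for each $i$ the $i$-th coordinate projection of $w$ is accepted by $M_i'$, iff that projection has the form $u_i \$^{k_i}$ for some $u_i \in L_i$; since all $n$ projections share the common length $|w|$, this is precisely the padded encoding of some $(u_1, \ldots, u_n) \in L_1 \times \cdots \times L_n$.

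There is no real obstacle here beyond bookkeeping; the one point worth noting is that $A_n$ excludes the all-$\$$ letter, which is consistent with the definition of $\pad$: at every position of a padded word at least one coordinate is still a genuine letter of $A$, so $M$ never needs to consume the forbidden letter, and the empty padded word $\pad(\ew, \ldots, \ew) = \ew$ is accepted by $M$ exactly when each $\ew \in L_i$. Hence $L(M) = \pad(L_1 \times \cdots \times L_n)$ is regular over $A_n$, so the Cartesian product is synchronously regular.
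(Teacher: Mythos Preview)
Your argument is correct. The paper does not supply its own proof of this lemma; it is quoted as \cite[Lemma~2.3]{AS} and used as a black box, so there is nothing in the present paper to compare your proof against. The product-of-automata construction you give is the standard one, and you have handled the two points that actually need care: (i) because $A_n$ omits the all-$\$$ tuple, any accepted word of length $M$ has at least one coordinate whose projection ends in a genuine letter of $A$, forcing $\max_i |u_i| = M$ and hence that the accepted word is exactly $\pad(u_1,\dots,u_n)$; and (ii) the empty tuple is handled correctly since $(s_1,\dots,s_n)$ is accepting precisely when each $\ew\in L_i$. One could equally well argue via closure properties (each projection $\mathsf{pr}_i^{-1}(L_i\$^*)$ is regular over $(A\cup\{\$\})^n$, intersect and then intersect with $A_n^*$), but your explicit automaton is just as clean.
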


\begin{lemma}~\cite[Theorem~1.4.6]{echlpt}\label{lem:proj}
If $L \subset (A^*)^n$ is a
\sr\ language, then the projection on the first
coordinate given by the set 
${\mathsf{proj}}_1(L):=\{u \mid \exists (u,u_2,...,u_n) \in L\}$
is a regular language over $A$.
\end{lemma}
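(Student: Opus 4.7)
The plan is to reduce the projection operation on a synchronously regular language to two successive monoid homomorphisms, each of which preserves regularity. The key observation is that $\pad(L)$ lives in the free monoid $A_n^*$, where each letter is a column vector from $(A\cup\{\$\})^n$ (minus the all-$\$$ column), and the first coordinate of such a vector is a letter in $A\cup\{\$\}$; by stripping off this first coordinate one letter at a time and then deleting the $\$$ symbols, we should recover exactly $\mathsf{proj}_1(L)$.

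More concretely, first I would define the monoid homomorphism $\pi_1\colon A_n^* \ra (A\cup\{\$\})^*$ given on letters by $(c_1,\dots,c_n) \mapsto c_1$. Since $\pad(L)$ is regular over $A_n$ by hypothesis, and since regular languages are closed under images of monoid homomorphisms (cited in the paper from~\cite[Theorem~3.5]{hu}), the language $\pi_1(\pad(L)) \subseteq (A\cup\{\$\})^*$ is regular. Next, I would introduce the homomorphism $\eta\colon (A\cup\{\$\})^* \ra A^*$ that fixes each generator in $A$ and sends $\$$ to the empty word $\ew$, and apply it to get another regular language $\eta(\pi_1(\pad(L))) \subseteq A^*$.

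It then remains to check that this composite equals $\mathsf{proj}_1(L)$. Given $(u_1,\dots,u_n) \in L$ with $u_i=a_{i,1}\cdots a_{i,j_i}$ and $M=\max_i j_i$, the first-coordinate slice of $\pad((u_1,\dots,u_n))$ spells out exactly $u_1 \$^{M-j_1}$, so $\pi_1$ sends it to $u_1 \$^{M-j_1}$ and $\eta$ erases the trailing $\$$'s to yield $u_1$. Conversely, every word in $\eta(\pi_1(\pad(L)))$ arises in this way. Hence $\eta(\pi_1(\pad(L)))=\mathsf{proj}_1(L)$, and the right-hand side is regular.

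There is no serious obstacle here; the lemma is essentially a repackaging of the closure of regular languages under homomorphic image, and the only thing to be careful about is the bookkeeping of the padding symbols, which is resolved by composing the two homomorphisms in the order above rather than trying to perform the projection in a single step inside $A_n^*$.
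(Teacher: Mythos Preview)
Your argument is correct. The paper does not actually give a proof of this lemma; it simply cites \cite[Theorem~1.4.6]{echlpt} and uses the result as a black box, so there is no in-paper argument to compare against. Your two-homomorphism reduction---first projecting each column of $A_n$ to its first entry, then erasing the padding symbol $\$$---is the standard proof of this fact and matches what one finds in the cited reference.
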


See~\cite{echlpt} and~\cite{hu} for more information
about regular and \sr\ languages.


\subsection{Rewriting systems} \label{subsec:rs}


The definitions and results in this section
can be found in the text~\cite{sims} by Sims.

A {\em \crs}\label{def:crs} for a group $G$ consists of a set $A$
and a set of ``rules'' $R \subseteq A^* \times A^*$
(with each $(u,v) \in R$ written $u \ra v$)
such that $G$ is presented 
as a monoid by 
$G = Mon\langle A \mid u=v$ whenever $u \ra v \in R \rangle,$
and the rewritings of the form
$xuy \ra xvy$ for all $x,y \in A^*$ and $u \ra v$ in $R$,
with transitive closure $\ras$, satisfy:
\begin{enumerate}
\item[(1)] There is no infinite chain $w \ra x_1 \ra x_2 \ra \cdots$
of rewritings.
\item[(2)] Whenever there is a pair of rules of the form
$rs \ra v$ and $st \ra w$ [respectively, $s \ra v$ and $rst \ra w$]
in $R$ with $r,s,t,v,w \in A^*$ and
$s \neq \ew$, then there are rewritings $vt \ras z$ and $rw \ras z$
[respectively, $rvt \ras z$ and $w \ras z$]
for some $z \in A^*$.
\end{enumerate}
The rewriting system is {\em finite} if the sets 
$A$ and $R$ are both finite.

The pairs of rules in item (2) are 
called {\em critical pairs}, and when property
(2) holds, the critical pairs are said to be
{\em resolved}.  
The set $Irr(R)$ of irreducible words
(that is, words that cannot be rewritten) is a
set of normal forms for the group $G$ presented
by the complete rewriting system.

A {\em \cpr} for a group $G$ consists of a set $A$
and a set of rules $R \subseteq A^* \times A^*$
(with each $(u,v) \in R$ written $u \ra v$)
such that  $G$ is presented
(as a monoid) by 
$G = Mon\langle A \mid u=v$ whenever $u \ra v \in R \rangle,$
and the rewritings 
$uy \ra vy$ for all $y \in A^*$ and $u \ra v$ in $R$ satisfy:
{(1)} There is no infinite chain $w \ra x_1 \ra x_2 \ra \cdots$
of rewritings, and
{(2)} each $g \in G$ is 
represented by exactly one irreducible word
over $A$.
(The difference between a \prs\ and a rewriting system 
is that rewritings of the form $xuy \ra xvy$ with
$x \in A^* \setminus \{\ew\}$ and $u \ra v \in R$ 
are allowed in a rewriting system, but only
rewritings $uy \ra vy$ are allowed in a \prs.)
The \prs\ is {\em bounded} if $A$ is finite
and there is a constant
$k$ such that for each pair $(u,v)$ in $R$
there are words $s,t,w \in A^*$ such that
$u=ws$, $v=wt$, and $l(s)+l(t) \le k$.


\section{Closure properties of autostackable groups}\label{sec:closure}



\subsection{Graph products}\label{subsec:gp}


$~$

\vspace{.1in}

In this section we prove the first of
the closure properties, that each of
the stackability properties is preserved
by the graph product construction.

Given a finite simplicial graph $\la$
(with no loops or multiple edges) 
with vertices $v_1,...,v_n$, such that
each vertex $v_i$ is labeled by a group $G_i$,
the associated {\em graph product} is
the quotient $G\la$ of the free product
of the groups $G_i$ by the relations
that elements of vertex groups corresponding
to adjacent vertices in $\la$ commute.
Special cases include the free product 
(if $\la$ is totally disconnected) and direct
product (if $\la$ is complete) of the groups $G_i$.

For each $1 \le i \le n$, let $A_i$ be a finite \sym\ 
generating set for the vertex group $G_i$.
In this section we use the generating set 
$A := \cup_{i=1}^n A_i$ of $G\la$ for our constructions.
For each $i$, we let $I_i \subseteq \{1,...,n\}$
denote the set of indices $k$ such that
$v_k$ and $v_i$ are adjacent in $\la$.
This set $I_i$ can be partitioned into the
subsets $I_i^{>} := I_i \cap \{i+1,...,n\}$
and $I_i^{<} := I_i \cap \{1,...,i-1\}$.
Let $C_i:=A_i \cup \{\dol,\cgt\}$, 
where $\dol$ and $\cgt$ denote distinct letters not in $A$,
and define a monoid homomorphism
$\rr_i:A^* \rightarrow C_i^*$ by
defining 
\[\rr_i(a) := \begin{cases}
a & \mbox{if } a \in A_i \\
\cgt& \mbox{if } a \in A_k \text{ for some } k \in I_i^{>} \\
\ew & \mbox{if } a \in A_k \text{ for some } k \in I_i^{<} \\
\dol& \mbox{if } a \in A_k \text{ for some } k \in 
                              I \setminus (I_i \cup \{i\}).
\end{cases}\]

\begin{lemma}\label{lem:nfgrprod}
Let $G\la$ be a graph product of the groups $G_i=\langle A_i \rangle$,
let $A:=\cup_{i=1}^n A_i$, and suppose that
for each index $i$ the
set $\nf_i$ is a prefix-closed set of normal forms 
for $G_i$ over the generators $A_i$.
Then the language
$$
\nf_\la:=\cap_{i=1}^n \rr_i^{-1}((\nf_i\cgt^*\dol)^*\nf_i\cgt^*)
$$
is a prefix-closed set of normal forms for $G\la$.
\end{lemma}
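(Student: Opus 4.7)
The argument splits into three parts: prefix-closure, existence of representatives, and uniqueness.

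For prefix-closure, I would first check that for each $i$ the language $(\nf_i\cgt^*\dol)^*\nf_i\cgt^*$ is itself prefix-closed. A prefix of such a word cuts either inside an $\nf_i$-block (and lies in $\nf_i$ by prefix-closure of $\nf_i$), inside a $\cgt^*$-block (yielding a shorter $\cgt^*$-block), or immediately after a $\dol$ (in which case appending the empty word $\ew\in\nf_i$ shows the prefix already matches the regular expression with a trivial final factor). Since $\rr_i$ is a monoid homomorphism, the preimage under $\rr_i$ of a prefix-closed language is prefix-closed, and an intersection of prefix-closed languages is prefix-closed; this yields the prefix-closure of $\nf_\la$.

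For existence, I would give a rewriting procedure that transforms an arbitrary word $w\in A^*$ representing $g\in G\la$ into a word in $\nf_\la$ representing the same element, using only the defining relations of $G\la$: namely, commutation of two adjacent letters from vertex groups $A_j$ and $A_k$ whenever $k\in I_j$, and replacement of a maximal single-vertex-group subword by its $\nf_i$-normal form. The target is to sort each maximal ``clique subword'' (a maximal subword whose letters all lie in a common clique of pairwise-commuting vertex groups) so that letters from lower-indexed vertex groups precede those from higher-indexed ones, and so that within each resulting single-vertex-group subword the letters form a $\nf_i$-normal form. A straightforward induction on a lexicographic complexity measure (for example, number of commutation inversions together with total word length) shows the procedure terminates with a word satisfying $\rr_i(w')\in(\nf_i\cgt^*\dol)^*\nf_i\cgt^*$ for every $i$.

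For uniqueness, I would appeal to the standard graph-product normal form theorem (Green, also Hermiller--Meier): two words $u,w\in A^*$ represent the same element of $G\la$ if and only if they are connected by a finite sequence of the two elementary moves above. Then I would verify that the conditions defining $\nf_\la$ pin down a unique representative of each equivalence class. Specifically, the condition $\rr_i(w)\in(\nf_i\cgt^*\dol)^*\nf_i\cgt^*$ forces that within any maximal subword of $w$ made of letters from $\{A_j : j\in\{i\}\cup I_i\}$, every letter from $A_i$ precedes every letter from $A_k$ with $k\in I_i^{>}$, and the $A_i$-letters form a $\nf_i$-normal form. Taking all $i$ together, any pair of commuting generators $a\in A_j$, $b\in A_k$ with $j<k$ appearing in a common clique subword must occur in the order $a$ before $b$ (by the condition for $i=j$), so the ordering inside every clique subword is forced to be ascending in vertex-group index, with each single-vertex-group block in $\nf_i$-normal form.

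The main obstacle will be uniqueness: one must verify that the conditions coming from different indices $i$ combine consistently to single out a unique canonical representative, despite the fact that the ``sections'' (maximal subwords bounded by letters not commuting with $A_i$) differ across choices of $i$. The cleanest route is to work equivalence class by equivalence class under the commutation/vertex-group relations and use the ascending-clique ordering plus $\nf_i$-uniqueness within each vertex-group block to conclude that no two distinct words in $\nf_\la$ are related by the elementary moves of the graph-product normal form theorem.
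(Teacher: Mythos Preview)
Your overall plan is sound and close in spirit to the paper's, but the two proofs are organized differently, and there is one genuine gap in your termination argument.

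\textbf{How the paper proceeds.} Rather than rewriting directly over $A^*$, the paper passes to the syllable alphabet $X=\cup_i X_i$ with $X_i=G_i\setminus\{1\}$ and sets up a rewriting system $R=R'\cup R''$ there: $R'$ multiplies two adjacent syllables from the same $G_i$, and $R''$ moves a syllable from $G_j$ leftward past a block of commuting syllables when $j$ is smaller. Termination is proved by the monoid homomorphism $\alpha:X^*\to S^*$ sending every element of $X_i$ to a single symbol $s_i$ and observing that each rule strictly decreases the shortlex order of $\alpha(\cdot)$. Confluence is checked on critical pairs. This gives a set $Irr(R)$ of normal forms over $X$, and the paper then shows that the map $\beta$ expanding each syllable $g\in X_i$ to its $\nf_i$-normal form carries $Irr(R)$ bijectively onto $\nf_\la$, by checking both inclusions directly (the inclusion $\nf_\la\subseteq\beta(Irr(R))$ uses that nonempty subwords of $\nf_i$-words never represent the identity). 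Prefix-closure is handled last, essentially as you do.

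\textbf{The gap in your existence step.} Your proposed complexity measure (commutation inversions, then total word length) does not decrease under your second move. Replacing a maximal $A_i^*$-subword by its $\nf_i$-normal form may strictly increase word length; worse, if the replacement is the empty word, the two neighbouring blocks can become adjacent and produce \emph{new} inversions. For instance, with $\la$ on vertices $1,2,3$ and only the edge $\{1,3\}$, the word $a_3\,(a_2a_2^{-1})\,a_1$ has no inversions, but after the replacement $a_2a_2^{-1}\to\ew$ one obtains $a_3a_1$, which has an inversion, and indeed $a_3a_1\notin\nf_\la$. So the process does not terminate under your measure as stated. The fix is precisely what the paper does: measure at the syllable level (e.g.\ shortlex of the index sequence), where both moves are non-increasing and at least one is strictly decreasing.

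\textbf{Uniqueness.} Your plan to invoke the Green/Hermiller--Meier normal form is a legitimate alternative to the paper's self-contained rewriting argument; the paper in fact remarks that its $Irr(R)$ coincides with Green's normal forms. Once you know each group element has a unique sorted syllable expression over $X$, uniqueness over $A$ follows from uniqueness of each $\nf_i$. Your observation that the $\rr_i$-conditions force, within each $\dol$-delimited section, all $A_i$-letters to precede all $A_k$-letters with $k\in I_i^{>}$, is exactly what is needed to recover that sorted syllable expression from a word in $\nf_\la$; just be careful that letters with indices in $I_i^{<}$ are erased by $\rr_i$, so the ``sections'' you see for different $i$ are not the same as ``maximal clique subwords'' of the original word.
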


\begin{proof}
Over the larger generating set 
$X:=\cup_{i=1}^n X_i$ of $G\la$ 
where each $X_i := G_i \setminus \{\gi_{G_i}\}$,
we note that the set of rules
$
R:=R' \cup R'',
$ 
where
$
R' :=
\{gh \ra (gh) \mid g,h \in X_i,~i \in \{1,...,n\}\}
$
and
$
R'' := \{gwh \ra hgw \mid g \in X_i, 
  h \in X_j, j \in I_i^{<}, w \in (\cup_{k \in I_j} X_k)^*\},
$
is a \crs\ for $G$.
Here $(gh)$ denotes the element of $X_i$ corresponding
to the product $gh$ in $G_i$ if $gh \neq_{G_i} \gi_{G_i}$,
and $(gh)$ denotes the empty word $\ew$ if $gh=_{G_i} \gi_{G_i}$.
Indeed,
if we let $S:=\{s_1,...,s_n\}$ 
have the total ordering defined by $s_i<s_j$ whenever $i<j$,
and define the monoid homomorphism 
$\alpha: X^* \ra S^*$ by $\alpha(g):=s_i$ for each $g \in X_i$,
then each rewriting $xuy \ra xvy$ with $x,y \in X^*$ and $u \ra v \in R$
satisfies the property that $\alpha(xuy) >_{sl} \alpha(xvy)$,
where $>_{sl}$ is the (well-founded) shortlex ordering on $S^*$,
and so there cannot
be an infinite sequence of rewritings.  
It is also straightforward to check that the critical
pairs are resolved (see Section~\ref{subsec:rs} for
this terminology), and so this is a \crs.
Hence the set $Irr(R)$ of irreducible words
for this system is a set of normal forms
for $G\la$ over $X$.

Now let $\beta:X^* \ra A^*$ be the monoid homomorphism 
mapping each $g \in X_i$ to the normal
form $\beta(g)$ of $g$ in $\nf_i$.
Then $\beta(Irr(R))$ is a set of normal forms for $G\la$
over $A$.  

Given a word $w \in Irr(R)$,
the image $\rr_i(\beta(w))$  lies in 
$(N_i\cgt^*\dol)^*N_i\cgt^*$ for all $i$,
by the choice of the
rewriting rules in $R$; hence,
$\beta(Irr(R)) \subseteq \nf_\la$.  
In the other direction, for any word 
$x \in \nf_\la$, we can consider the word
$x$ as an element of $X^*$ using the inclusion of
$A$ in $X$.  Since $\rr_i(x)$ lies in
$(\nf_i\cgt^*\dol)^*\nf_i\cgt^*$ for each $i$,
 the only rules of the rewriting system
$R$ that can be applied to $x$ are in the set $R'$.
Since each element of $G_i$ is represented
by only one word in $\nf_i$, and
the normal form set $\nf_i$ is prefix-closed,
it follows that nonempty subwords of words in
$\nf_i$ cannot represent the trivial element
of $G_i$.  Consequently any sequence of rewritings
of $x$ using the rules in $R'$
may only replace words in $X_i^+$ (that is,
nonempty words over $X_i$) with
words again in $X_i^+$.  
Hence any further rewritings from the system $R$
again can only apply rules in $R'$, resulting in
an irreducible word $x'$.  Applying $\beta$ returns the
original word $\beta(x')=x$.  
Therefore $\nf_\la = \beta(Irr(R))$.

Finally, prefix-closure of the sets $\nf_i$
yields prefix-closure of the languages
$(\nf_i\cgt^*\dol)^*\nf_i\cgt^*$ for each $i$,
which in turn implies prefix-closure of 
the language 
$\nf_\la$.
\end{proof}

We note that the normal forms $Irr(R)$ in Lemma~\ref{lem:nfgrprod}
are the same as those developed by Green in~\cite{Green},
and the set $\nf_\la$ is
also constructed using alternate methods in~\cite{GP} and~\cite{HsuWise}.
Next we use the normal form set $\nf_\la$ to
prove the closure properties for graph products.

\begin{theorem}\label{thm:grprod}
For $1 \le i \le n$ let $G_i$ be an autostackable
[respectively, stackable, \astkbl] group on
a finite \sym\ generating set $A_i$.
Then any graph product 
$G\la$ of these groups with the
generating set 
$A:=\cup_{i=1}^n A_i$ is also autostackable
[respectively, stackable, \astkbl].
\end{theorem}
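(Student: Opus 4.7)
The plan is to construct a bounded flow function $\ff$ on the Cayley graph of $G\la$ over $A$, with respect to the maximal tree $\ttt$ determined by the prefix-closed normal form set $\nf_\la$ from Lemma~\ref{lem:nfgrprod}, by combining the vertex-group flow functions $\ff_i$ with commutation swap moves. Given an edge $\ega$ with $a \in A_i$, I define $\ff(\ega)$ by a case analysis driven by the last letter of $y_g$: (i) if $y_g a \in \nf_\la$, set $\ff(\ega) := \ega$; (ii) if $y_g$ ends in a letter $b \in A_k$ with $k \in I_i \setminus \{i\}$, so that $a$ and $b$ commute in $G\la$, set $\ff(\ega)$ to be the length-$3$ path from $g$ labeled $b^{-1} a b$; (iii) otherwise (so $y_g = \ew$, or $y_g$ ends in a letter of $A_i$, or ends in a letter of a vertex group not adjacent to $G_i$), let $u$ be the $A_i$-projection of the maximal suffix of $y_g$ in $A_{I_i \cup \{i\}}^*$, which by Lemma~\ref{lem:nfgrprod} lies in $\nf_i$, and set $\ff(\ega)$ to be the path from $g$ in $\ga(G\la,A)$ labeled by $\lbl(\ff_i(\tilde e))$, where $\tilde e$ is the edge of $\ga(G_i,A_i)$ starting at the $G_i$-element represented by $u$ and labeled $a$. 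Each path ends at $ga$: in (ii) by the commutation $ab =_{G\la} ba$, and in (iii) because the label lies in $A_i^*$ and represents $a$ in $G_i$ and hence in $G\la$.

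Properties F1 and F2d follow directly from the construction: for F2d, when the undirected edge underlying $\ega$ lies in $\ttt$ one is either in case (i) or in case (iii) with $\tilde e$ itself lying in $\ttt_i$, in which case F2d for $\ff_i$ returns the single edge and its translate in $\ga(G\la,A)$ is $\ega$. The bound on the length of $\ff(\ega)$ is $\max(3,\max_i k_i)$, where $k_i$ bounds $\ff_i$. The main obstacle will be verifying F2r. My plan is to use a lexicographic well-founded order on non-tree edges with primary coordinate $|y_g|$: in case (ii), writing $y_g = y_g^{(1)} b$, the introduced edge $e_{y_g^{(1)},a}$ has source normal-form length $|y_g|-1$, and the companion edge $e_{y_g^{(1)}a,b}$ turns out to be a tree edge, which I will establish by verifying that each $\rr_j$-projection of $y_{y_g^{(1)}a}\cdot b$ remains in the pattern of Lemma~\ref{lem:nfgrprod}, using both that $y_g^{(1)}b \in \nf_\la$ and that $a$ commutes with $b$; in case (iii), each non-tree edge on the flow path sits above a strictly $<_{\ff_i}$-smaller non-tree edge of $\ga(G_i,A_i)$, so well-foundedness lifts from F2r for $\ff_i$ via a secondary coordinate recording the $\ff_i$-rank of the relevant $A_i$-block edge.

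For algorithmic stackability, the case determination, the extraction of $u$ from $y_g$, and the evaluation of $\sff_i(u,a)$ are all effective given computability of each $\sff_i$ and of $\nf_\la$ (which follows from Lemma~\ref{lem:nfgrprod} and the computability of the $\nf_i$'s). For autostackability, I would decompose the graph of the induced stacking map as a finite union of three subsets of $A^* \times A \times A^*$ corresponding to the three cases, each shown to be \sr\ by closure arguments based on Lemmas~\ref{lem:peel}, \ref{lem:product}, and \ref{lem:proj}: case (i) from the regularity of $\nf_\la$ combined with the quotient construction of Lemma~\ref{lem:peel}; case (ii) from a bounded suffix substitution applied to a regular subset of $\nf_\la$; and case (iii), the most delicate, from combining the \sr\ graph of each $\sff_i$ with the regular extraction of the $A_i$-projection of the maximal $A_{I_i \cup \{i\}}^*$-suffix of $y_g$, using closure of regular and \sr\ languages under preimage, intersection, and synchronous padding.
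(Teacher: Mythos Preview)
Your overall strategy---combining the vertex-group flow functions with commutation swap moves---matches the paper's, but your case analysis has a genuine gap.

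In case~(ii) you swap whenever $\last(y_g)$ lies in an adjacent vertex group $A_k$ with $k\neq i$, and you assert that the third edge $e_{gb^{-1}a,\,b}$ is always a tree edge. This is false. Take $\la$ to be a single edge with $G_1=\langle s\rangle\cong\Z$ (normal forms $s^n$) and $G_2=\langle t\rangle\cong\Z/3$ with prefix-closed normal forms $\nf_2=\{\ew,t,t^2\}$, so that the single letter $t^{-1}\notin\nf_2$. Then $\nf_\la=\{s^m t^n\mid m\in\Z,\ n\in\{0,1,2\}\}$. For $y_g=s$ and $a=t^{-1}\in A_2$ you are in case~(ii) (since $\last(y_g)=s\in A_1$ and $1\in I_2$), and your swap path is labeled $s^{-1}t^{-1}s$. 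The third edge is $e_{t^{-1},s}$; but $y_{t^{-1}}=t^2$ and $t^2 s\notin\nf_\la$ (since $\rr_1(t^2s)=\cgt\cgt s\notin\cll_1$), so this edge is \emph{not} in the tree. Moreover its source has normal-form length $2>1=|y_g|$, so your primary lexicographic coordinate $|y_g|$ does not decrease, and your argument for (F2r) breaks down.

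The paper sidesteps this by using a finer trigger for the swap: it swaps exactly when $\rr_i(y_g)$ ends in the symbol $\cgt$ (equivalently, when the suffix of $y_g$ after its last $A_i$-block consists only of letters from higher-index adjacent groups), and otherwise applies $\sff_i$ to $\suf_i(y_g)$, the maximal $A_i$-suffix of $y_g$. Under this condition one can explicitly compute the normal form $y_{gb^{-1}ab}$ and verify that it ends in $b$, so the third edge really is a tree edge; this is the most delicate part of the paper's argument. In the example above the paper does \emph{not} swap (since $\rr_2(s)=\ew$ does not end in $\cgt$) and instead applies $\sff_2(\ew,t^{-1})$, which lies entirely in the tree. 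Your case~(iii) would in fact do the same thing; the error is that your case~(ii) is too broad and pre-empts it. Restricting your swap to the situation $k\in I_i^{>}$ (or equivalently to the paper's $\rr_i$-condition) repairs the argument, but then you must also prove the tree-edge claim for the third edge, which requires the careful $\rr_j$-by-$\rr_j$ verification the paper carries out.
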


\begin{proof}
Let $\nf_i$, $\ff_i$, and $\sff_i$ be the normal form set over $A_i$,
the bounded flow function, and 
the stacking map for the group $G_i$, respectively.
To streamline the discussion,
for each $1 \le i \le n$ we denote the language
$(\nf_i\cgt^*\dol)^*\nf_i\cgt^*$ by $\cll_i$.
Also let $\nf_\la$
be the normal form set for $G\la$ from Lemma~\ref{lem:nfgrprod},
and as usual denote the normal form for $g \in G\la$ by $y_g$.
Let $\ga$ be the Cayley graph of $G\la$ over $A$, with
sets $\vec E$ of directed edges and $\vec P$ of
directed edge paths, and let
$\ttt$ be the maximal tree in $\ga$ 
corresponding to this set of
normal forms.

\smallskip

\noindent {\em Step 1:  Stackable.}

We begin by defining a function $\sff:\nf_\la \times A \ra A^*$
as follows.  
Recall from Section~\ref{sec:notation} that for
any word $w \in A^*$, 
$\last(w)$ denotes the last letter of the word $w$,
and
$\suf_{A_i}(w)$, which we shorten to 
$\suf_i(w)$ throughout this proof, denotes the maximal suffix 
of $w$ in the letters of the subset $A_i$ of $A$.
Now for each $y_g \in \nf_\la$ and $a \in A_k$ we define
\[\sff(y_g,a) := \begin{cases}
\sff_k(\suf_k(y_g),a) & 
       \mbox{if } \rr_k(y_g) \notin C_k^*\cgt \\
\last(y_g)^{-1}a\last(y_g) & 
       \mbox{if } \rr_k(y_g) \in C_k^*\cgt.
\end{cases}\]
We also let $\ff:\vece \ra \vecp$ denote the function
$\ff(\ega):=\path(g,\sff(y_g,a))$.

It follows immediately from the definition of $\ff$ that
property (F1) of the definition of flow function holds.

To check property (F2d), consider any directed edge $e=\ega$ 
whose underlying undirected edge lies in the tree $\ttt$.
Then either $y_ga$ or $y_{ga}a^{-1}$ is an element
of $\nf_\la$.  Let $k$ be the index such that 
$a \in A_k$.  Now either
$\rr_k(y_ga)=[\rr_k(y_g)]a \in [(\nf_k\cgt^*\dol)^*\nf_k\cgt^*]a
\subseteq \cll_k$
or
$\rr_k(y_{ga}a^{-1})= \rr_g(y_g) \in \cll_k a^{-1} \cap \cll_k$,
and so in both cases we have
$\rr_k(y_g) \notin C_k^*\cgt$.  
Hence $\sff(y_g,a)=\sff_k(\suf_k(y_g),a)$.
Write $\rr_k(y_g)=vw$ where $v \in (\nf_k\cgt^*\dol)^*$
and $w \in \nf_k$.  Since $\rr_k(b)=\ew$
for all $b \in \cup_{i \in I_k^{<}} A_i$,
then in the word $y_g$, 
the letters from $w$ may be interspersed with,
or precede, such a letter $b$.  However, 
if $w \neq \ew$ and 
$b$ is the first letter in $\cup_{i \in I_k^{<}} A_i$ 
occurring after the 
first letter of $w$ in $y_g$, and we let
$a'$ be the letter from $w$ that immediately
precedes $b$ in $y_g$, then
for the index $i$ such that $b \in A_i$, the word
$\pi_i(a'b)=~\cgt b$ is a subword of $\rr_i(y_g)$,
giving a contradiction.  This shows that
$\suf_k(y_g)=w$.
Now we have that either $y_ga \in \nf_\la$,
in which case $\suf_k(y_g)a=wa \in \nf_k$,
or else $y_{g}=y_{ga}a^{-1}$, in which case
$\suf_k(y_g)$ ends with the letter $a^{-1}$.
Since the flow function
$\ff_k$ satisfies property (F2d),
then $\sff_k(\suf_k(y_g),a)=a$.  Therefore
$\ff(e)=e$, and (F2d) holds for $\ff$.

Next we turn to property (F2r).
Define a function $\psi:\vece \ra \N^2$ 
on $\ega \in \vece$ with $a \in A_k$ by
$\psi(\ega):=(l(y_g),0)$ if  $\rr_k(y_g) \in (C_k)^*\cgt$, 
and
$\psi(\ega):=(0,\redl_k(\suf_k(y_g),a))$ if
$\rr_k(y_g) \notin (C_k)^*\cgt$,
where the
{\em descending chain length}
$\redl_k(w,a)$ denotes the maximum length of a descending chain 
$e_{w,a} >_{\ff_k} e_1 >_{\ff_k} e_2 \cdots >_{\ff_k} e_n$
of edges for the well-founded ordering obtained
from the flow function $\ff_k$.
Let $<_{\N^2}$ be the lexicographic ordering on $\N^2$;
that is, $(a,b) <_{\N^2} (c,d)$ if either $a<c$  or
$a=c$ and $b<d$, where we use the standard ordering on $\N$.
Note that $<_{\N^2}$ is a well-founded strict partial ordering.
In order to show that property (F2r) holds for 
the function $\ff$, it suffices to show that
whenever $e' <_{\ff} e$, then $\psi(e') <_{\N^2} \psi(e)$.
Making use of the fact that the ordering 
$<_\ff$ is a transitive closure of another relation, 
it then suffices to show that
whenever $e'$ is an edge of $\ff(e)$ and
neither $e'$ nor $e$ is in the tree $\ttt$,
then $\psi(e') <_{\N^2} \psi(e)$.

Now suppose that $e=\ega$ is any element
of $\vece$ and the undirected
edge of $\ga$ underlying $\ega$ does not
lie in $\ttt$, and let $k$ be the index
such that $a \in A_k$. 
In this case the words $y_g a$ and $y_{ga}a^{-1}$
are not in the normal form set $\nf_\la$.
Note that for any index $i \neq k$, the image of $y_g a$
under $\rr_i$ has the form $\rr_i(y_g a)=\rr_i(y_g)\rr_i(a)$
where $\rr_i(y_g) \in \cll_i$ and $\rr_i(a) \in \{\cgt,\ew,\dol\}$, 
and therefore also $\rr_i(y_g a) \in \cll_i$.
Then since $y_g a \notin \nf_\la$, we must have
$\rr_k(y_g a) =\rr_k(y_g)a \notin \cll_k=(\nf_k\cgt^*\dol)^*\nf_k\cgt^*$.
We treat the cases in which $\rr_k(y_g)$
does or does not end with the letter $\cgt$ separately.

Suppose first that 
$\rr_k(y_g)$ does not end with $\cgt$. 
Then $\rr_k(y_g) \in (\nf_k\cgt^*\dol)^*\nf_k$;
write $\rr_k(y_g) = uv$ for $u \in (\nf_k\cgt^*\dol)^*$
and $v \in \nf_k$.
Applying the discussion in the proof
of (F2d) above, then $v=\suf_k(y_g)$; hence
we can write $y_g=\tilde u v$.
Now $\psi(e)=(0,\redl_k(\suf_k(y_g),a))=(0,\redl_k(v,a))$
where $\redl_k(v,a)$
is the maximum length of a descending chain of edges
starting from $e_{v,a}$ for the flow function $\ff_k$,
and since $\rr_k(y_g a) \notin \cll_k$ and $\rr_k(y_g)$
does not end with $a^{-1}$, the descending
chain length satisfies $\redl_k(v,a)>0$.
For any edge $e'$ in the path $\ff(e)$,
we have $e'=e_{\tilde u v',c}$
for some $v' \in \nf_k$ and $c \in A_k$ such that
$e_{v',c}$ is an edge of the path 
$\ff_k(e_{v,a})$.  Note that,
as above, $\suf_k(\tilde u v')=v'$.  In the
case that $e'$ does not lie in the tree $\ttt$,
then $e_{v',c} <_{\ff_k} e_{v,a}$ and 
$\rr_k(\tilde u v')=\rr_k(\tilde u)v'
\in \cll_k \cap (C_k^* \setminus C_k^*\cgt)$,
and hence $\psi(e')=(0,\redl_k(\suf_k(y_{\tilde u v'}),c))
=(0,\redl_k(v',c))
<_{\N^2} (0,\redl_k(v,a))=\psi(e)$.

On the other hand suppose  that 
$\rr_k(y_g)$ ends with $\cgt$.    
The path $\ff(e)$ contains three directed edges
$e_1:=e_{g,b^{-1}}$, 
$e_2:=e_{gb^{-1},a}$, and
$e_3:=e_{gb^{-1}a,b}$ where $b:=\last(y_g)$.
Note that $y_g=y_{gb^{-1}}b$, and $l(y_g) \ge 1$.
The undirected edge underlying
$e_1$ lies in the tree $\ttt$.
If $e_2$ does not lie in the tree $\ttt$, then
$y_{gb^{-1}} a \notin \nf_\la$, and so
the argument (two paragraphs) above also shows that
$\rr_k(y_{gb^{-1}} a) \notin \cll_k$.  
In that case, either $\rr_k(y_{gb^{-1}})$
ends with the letter $\cgt$, in which case
$\psi(e_2)=(l(y_{gb^{-1}}),0)=(l(y_g)-1,0) <_{\N^2} (l(y_g),0)=\psi(e)$,
or else $\rr_k(y_{gb^{-1}} a)$ does not end with $\cgt$,
in which case
$\psi(e_2)=(0,\redl_k(\suf_k(y_{gb^{-1}}),a))<_{\N^2} (l(y_g),0)=\psi(e)$.
So in all cases, $\psi(e_2) <_{\N^2} \psi(e)$.

Finally, to analyze the edge $e_3$, we first
note that since $\rr_k(y_g) \in (\nf_k\cgt^*\dol)^*\nf_k\cgt^+$,
we can decompose the normal form for $g$
as $y_g = u_1u_2u_3b$ where 
$\rr_k(u_1) \in (\nf_k\cgt^*\dol)^*$ and
the length of the prefix $u_1$ is maximal with respect
to this property, $u_2 \in \nf_k$ (including the
possibility that $u_2$ is the empty word $\ew$),
$\rr_k(u_3b) \in~\cgt^+$, and the first letter $c$ of 
$u_3b$ satisfies $\rr_k(c) =~\cgt$.
Note that $gb^{-1}ab =_{G\la} ga =_{G\la} u_1y_{u_2a}u_3b$;
we claim that $u_1y_{u_2a}u_3b \in \nf_\la$.
Indeed, for the index $k$ we have 
$\rr_k(u_1y_{u_2a}u_3vb) = \rr_k(u_1)y_{u_2a}\rr_k(u_3b)
\in (\nf_k\cgt^*\dol)^*\nf_k \cgt^+ \subseteq \cll_k$.
For each index $i \neq k$, the word
$\rr_i(u_1y_{u_2a}u_3vb)$ is obtained from
$\rr_i(y_g)$ by removal of the subword 
$\rr_i(u_2)$ and insertion of the subword 
$\rr_i(y_{u_2a})$; we consider these steps separately. 
The word $\rr_i(u_1u_3b)$  
is obtained from $\rr_i(y_g)$ by removing 
a subword from 
$\cgt^* \cup~\dol^+$.  Deleting a $\cgt^*$
subword preserves membership in the language $\cll_i$.
In the case that
$\rr_i(u_2) \in \dol^+$, then the vertices 
$v_i$ and $v_k$ of $\la$ are not adjacent,
and $\rr_k(d)=\dol$
for all $d \in A_i$; hence no letter of
$u_3b$ lies in $A_i$.  Removal of a
$\dol^+$ subword from a suffix in
$\{\cgt,\dol\}^*$ also preserves membership
in $\cll_i$.  Hence we have
$\rr_i(u_1u_3b) \in \cll_i$.
Next, $\rr_i(u_1y_{u_2a}u_3b)$ 
is obtained
from $\rr_i(u_1u_3b)$ by the insertion of 
a subword from 
$\cgt^* \cup~\dol^+$, and
insertion of such a subword preserves
membership in $\cll_i$ unless the 
inserted word is nonempty and immediately precedes a
letter lying in $A_i$. 
In the case that $\rr_i(y_{u_2a}) \in~\cgt^+$,
we have $i<k$.  
Moreover, the first letter
$c$ of $u_3b$ satisfies
$\rr_k(c) =~\cgt$, and so $c \in A_l$
for an index $l$ satisfying
$k<l$.  Then $i<l$, and so the word $\rr_i(y_{u_2a})$
is inserted immediately before the letter
$\rr_i(c) \in \{\cgt,\dol\}$.
In the case that $\rr_i(y_{u_2a}) \in \dol^+$,
again the vertices $v_i$ and $v_k$ are
not adjacent and no letter of $A_i$ appears
in $u_3b$, so no $\dol$ is inserted
preceding a letter of $A_i$.  
Thus $\rr_i(u_1y_{u_2a}u_3b) \in \cll_i$,
 completing the proof of the claim.
Since the normal form $y_{gb^{-1}ab}=u_1y_{u_2a}u_3b$ 
ends with the letter $b$, then the
edge $e_3$ lies in $\ttt$.

Thus we have that for any edge $e'$ in the path $\ff(e)$,
either $e'$ lies in the tree $\ttt$, or else
$\psi(e') <_{\N^2} \psi(e)$, as required.
This completes the proof of (F2r) and
the proof that $\ff$ is a flow
function associated to the tree $\ttt$
in the Cayley graph for $G\la$ over $A$.
It follows from the construction
of $\ff$ that this flow function is bounded by the
constant $\max\{3,bd(\ff_1),...,bd(\ff_n)\}$,
where for each index $i$, $bd(\ff_i)$ denotes the bound on
the flow function $\ff_i$.

\smallskip

\noindent {\em Step 2:  Autostackable.}

Now suppose further that the set $graph(\sff_k)$
is \sr\ for all indices $1 \le k \le n$.
Following the piecewise description of the
\wstf\ $\sff$ associated to $\ff$ given
in Step 1 above, its graph can be written in the
form
\begin{eqnarray*}
graph(\sff) &=& \cup_{k=1}^n \cup_{a \in A_k} 
~[  (\cup_{x \in im(\sff_k)} 
       ~graph(\sff) \cap
             (A^* \times \{a\} \times \{x\})) \\
&& \medcup (\cup_{i \in I_k} \cup_{b \in A_i} 
       ~graph(\sff) \cap (A^* \times \{a\}
       \times \{b^{-1}ab\})) ]\\
&=& (\cup_{k \in \{1..n\},~a \in A_k,~x \in \{\sff_k(y,a) \mid y \in \nf_k\}} 
      \ \ \ \ L_{k,a,x} \times \{a\} \times \{x\}) \\
&& \medcup (\cup_{k \in \{1..n\},~a \in A_k,~i \in I_k,~b \in A_i} 
      \ \ \ \ L_{k,a,b}' \times \{a\} \times \{b^{-1}ab\} )
\end{eqnarray*}
where

\hspace{.1in} $L_{k,a,x} = \{y_g \in \nf_\la \mid \rr_k(y_g) \notin C_k^*\cgt$
    and $\sff_k(\suf_k(y_g),a)=x \}  $,~~and

\hspace{.1in} $L_{k,a,b}' = \{y_g \in \nf_\la \mid \rr_k(y_g) \in C_k^*\cgt$
    and $\last(y_g)=b\}      $

Since the class of \sr\ languages is closed under finite
unions,
and all finite sets are regular, then Lemma~\ref{lem:product}
shows that in order to prove that $graph(\sff)$ is \sr,
it suffices to show that the languages $L_{k,a,x}$ and $L_{k,a,b}'$
are regular.

We begin by considering the language $\nf_\la$ of normal forms.
Lemma~\ref{lem:proj} says that the projection of
$graph(\sff_k)$ on the first coordinate, which 
is the language $\nf_k$ of normal forms for the group 
$G_k$, is regular.  Since regular languages are
closed under concatenation and Kleene star,
then $\cll_k=(\nf_k\cgt^*\dol)^*\nf_k\cgt^*$ is also regular.
Finally closure of regular languages under 
homomorphic inverse image and finite intersection implies
that $\nf_\la$ is also regular.

Now $L_{k,a,b}' = \nf_\la \cap \rr_k^{-1}(C_k^*\cgt) \cap A^*b$.
Since the class of regular languages is closed under Kleene
star, concatenation, and homomorphic preimage, then
each of the three sets in this intersection is regular, and
so $L_{k,a,b}'$ is a regular language.

The proof in Step 1 above shows that
for any 
$y_g \in L_{k,a,x} \subseteq \nf_\la \cap \rr_k^{-1}(C_k^* \setminus C_k^*\cgt)$, 
the word $y_g$ can be written in the form $y_g=y'\suf_k(y_g)$ where
$\rr_k(y') \in (\nf_k\cgt^*\dol)^*$.
Then
$L_{k,a,x} = \nf_\la 
\cap 
\rr_k^{-1}((\nf_k\cgt^*\dol)^*) N_{k,a,x}$ where
$N_{k,a,x}:={\mathsf{proj}}_1(graph(\sff_k) \cap (A_k^* \times \{a\} \times \{x\}))$
is the set of all words
$w \in \nf_k$ such that $\sff_k(w,a)=x$.
Since $graph(\sff_k)$ is \sr\ and
the intersection of two \sr\ languages is \sr,
then applying Lemma~\ref{lem:proj} shows that the language $N_{k,a,x}$
is regular.  Then using closure properties of
regular languages, we also have that $L_{k,a,x}$ is regular.

Therefore the set 
$graph(\sff)$ is \sr, and $G\la$ is autostackable over $A$.

\smallskip

\noindent{\em Step 3:  Algorithmically stackable.}

The proof in this case is similar to the argument in Step 2.
\end{proof}


\subsection{Extensions}\label{sec:ext}


$~$

\vspace{.1in}

We continue the investigation into closure properties with the 
extension of a group $K$ by a group $Q$.

\begin{theorem}\label{thm:extn}
Let  
$
1   \rightarrow K \overset{\iota}{\rightarrow}G \overset{q}{\rightarrow} Q \rightarrow 1
$
be a short exact sequence of groups and group homomorphisms.  
If $K$ and $Q$ are 
autostackable
[respectively, stackable, \astkbl] groups on
finite \sym\ generating sets $A$ and $B$, respectively,
and $\hb \subseteq G$ is an inverse-closed subset
of $G$ that bijects via $q$ to $B$, then
the group $G$ with the generating set $i(A) \cup \hb$
is also autostackable
[respectively, stackable, \astkbl]. 
\end{theorem}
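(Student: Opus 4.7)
The plan is to mirror the graph-product proof of Theorem~\ref{thm:grprod}: first construct prefix-closed normal forms for $G$ from those of $K$ and $Q$, then define a bounded flow function piecewise, verify (F1), (F2d), (F2r), and finally bootstrap the language-theoretic and algorithmic conclusions from those on $K$ and $Q$.

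For normal forms, extend the bijection $B\leftrightarrow\hb$ letterwise to a map $B^*\to\hb^*$, $w\mapsto\widehat w$, and take $\nf_G:=\iota(\nf_K)\cdot\widehat{\nf_Q}$ (with $\iota$ extended letterwise to a map $A^*\to\iota(A)^*$). Each $g\in G$ has a unique expression $g=\iota(k_g)\cdot\widehat{y_{q(g)}}$ with $k_g\in K$, so $y_g:=\iota(y_{k_g})\widehat{y_{q(g)}}$ is a well-defined normal form, and prefix-closure of $\nf_G$ follows directly from prefix-closure of $\nf_K$ and $\nf_Q$. Since $K$ is normal in $G$, for each pair $(a,\beta)\in A\times B$ I precompute once a word $u_{a,\beta}\in A^*$ with $\iota(u_{a,\beta})=_G\hat\beta\,\iota(a)\,\hat\beta^{-1}$; and for each pair $(w,b)$ with $b\in B$, $w\in B^{\le k_Q}$, and $w=_Q b$ (where $k_Q$ bounds $\sff_Q$), a word $v_{w,b}\in A^*$ with $\iota(v_{w,b})=_G\hat b\cdot\widehat w^{-1}$. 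These are finitely many words, so their lengths are bounded by some constant $M$.

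Writing $y_g=\iota(y_k)\widehat{y_q}$, define $\sff$ by cases: (Ia) if $a=\iota(a')$ and $\widehat{y_q}=\ew$, set $\sff(y_g,a):=\iota(\sff_K(y_k,a'))$; (Ib) if $a=\iota(a')$ and $\widehat{y_q}$ ends in $\hat\beta$, set $\sff(y_g,a):=\hat\beta^{-1}\iota(u_{a',\beta})\hat\beta$; (IIa) if $a=\hat b$ and the $Q$-edge $e_{q,b}$ lies in $\ttt_Q$, set $\sff(y_g,a):=\hat b$; (IIb) if $a=\hat b$ and $e_{q,b}\notin\ttt_Q$, set $\sff(y_g,a):=\iota(v_{\sff_Q(y_q,b),b})\cdot\widehat{\sff_Q(y_q,b)}$. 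Each output equals $a$ in $G$ by direct computation, establishing (F1), and the tree-edge cases give (F2d). The flow is bounded by $\max\{k_K,M+2,M+k_Q\}$. For (F2r), I order $\vece$ by the lexicographic map $\psi\colon\vece\to\N^4$ assigning $(1,0,0,\redl_K(y_k,a'))$ in Case (Ia), $(2,0,|y_q|,0)$ in Case (Ib), and $(3,\redl_Q(y_q,b),0,0)$ in Case (IIb); inspection of each case shows that every non-tree edge $e'$ on $\ff(e)$ satisfies $\psi(e')<\psi(e)$, using prefix-closure of $\nf_Q$ to obtain $|y_{q\beta^{-1}}|=|y_q|-1$ in Case (Ib), (F2r) for $\ff_Q$ on the $\hat c_i$-suffix edges of the output in Case (IIb) (while the $\iota(v)$-prefix edges drop to Cases (Ia)/(Ib), giving a smaller first coordinate), and (F2r) for $\ff_K$ in Case (Ia).

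For the synchronously regular (resp.\ computable) refinement, partition $graph(\sff)$ along the four cases. The language $\nf_G$ is regular by closure under homomorphic image and concatenation. In each case the output is either a fixed word or built from $\sff_K(y_k,a')$ or $\sff_Q(y_q,b)$, and the input language is cut out by regular conditions on the last letter of $\widehat{y_q}$ or on $\sff_Q(y_q,b)$, obtained from $graph(\sff_K)$ and $graph(\sff_Q)$ via Lemmas~\ref{lem:product} and~\ref{lem:proj}. The main obstacle will be the (F2r) case analysis---weaving together the three subsystem flows (for $K$, for $Q$, and for commuting $\iota$-letters past the $\hb$-suffix) so that a single $\psi$ strictly decreases in the appropriate coordinate at every step---together with the regularity verification for Case (IIb), where both the $\iota$-part and the $\widehat{\cdot}$-part of the output vary with $y_g$ through $\sff_Q(y_q,b)$.
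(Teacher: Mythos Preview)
Your proposal is correct and takes essentially the same approach as the paper: the normal form set $\nf_G=\iota(\nf_K)\widehat{\nf_Q}$, the three-piece stacking function (use $\sff_K$ on the $K$-copy, conjugate an $\iota(A)$-letter past the last $\hb$-letter, and prepend a $K$-correction word to $\widehat{\sff_Q}$ for $\hb$-letters), and the lexicographic termination argument via descending-chain-lengths in $K$ and $Q$ all match the paper. The only cosmetic differences are that you split the $\hb$-case into tree/non-tree subcases (the paper's single formula already returns $\hat b$ on $Q$-tree edges since then $\sff_Q(y_q,b)=b$), and your $\psi$ lands in $\N^4$ with zero padding where the paper uses $\N^2$.
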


\begin{proof}
Let $\nf_K$, $\ff_K$, and $\sff_K$ be the normal form
set, bounded flow function, and associated stacking
map for $K$ over $A$, and similarly let 
$\nf_Q$, $\ff_Q$, and $\sff_Q$ be the normal form
set, bounded flow function, and associated stacking
map for $Q$ over $B$.
Let $K = \langle A | R \rangle$ and $Q = \langle B | S \rangle$
be the finite presentations obtained from 
these flow functions. 
By slight abuse of notation, we 
will consider the homomorphism $i$ to be an
inclusion map, and $A,K \subseteq G$,
so that we may omit writing $i(\cdot)$.
Let $C := A \cup \hat{B}$.  

For each $b \in B$, there is a unique element
$\hat b \in \hb$ with $q(\hat b)=b$.
For each word $w = b_1 \cdots b_n\in B^*$, we define 
$\hat{w}:= \hat{b_1} \cdots \hat{b_n}$. 
Let $\bar{\ }$ be the map
from $\hb^*$ to $B^*$ that reverses
the map $\hat{~}$; that is, for any letter $c \in \hb$,
$\overline{c}=q(c) \in B$, and for any
word $v=c_1 \cdots c_n \in \hb^*$, 
then $\overline{v}=\overline{c_1} \cdots \overline{c_n}$.
For any set $L \subset B^*$, define 
$\widehat{L}:=\{ \hat{w} | w \in L\}$. 
Define
$$
\nf_G := \nf_K \hq.
$$
Since $Q \cong G/K$ and the set $\hq \subset \hb^*$
bijects (via $q$) to $Q$, the language $\hq$ is a
set of coset representatives for $G/K$, 
and every element $g$ of $G$ can be written
uniquely in the form $g=kp$ for some $k \in K$ and $p \in \hq$;
that is, the set $\nf_G$ is a set of normal forms for $G$
over the finite \sym\ generating set $C$.

For each $g \in G$, let $y_g$ denote the
normal form of $g$ in $\nf_G$.  We also use the notation
$y_g=u_gt_g$ where $u_g \in \nf_K$
and $t_g \in \hq$.
Now $G$ has the presentation
\[G = \langle C \mid  R \cup 
\{ \hat{s} = u_{\hat{s}} \mid {s} \in {S}\} \cup
\{ \hat{b} a = u_{\hat{b}a\hat{b}^{-1}} \hat{b} \mid a \in A, \hat{b} \in \hat{B}\} \rangle. 
\]

Let $\ga$ be the Cayley graph of $G$ with respect to $C$,
and let $\vec E$ and $\vec P$ be the sets of directed edges
and directed paths in $\ga$.  
Since both $\nf_K$ and $\nf_Q$ are prefix-closed, 
the language $\nf_G$ is prefix-closed as well,
and so $\nf_G$ determines a maximal tree $\ttt$ in $\ga$.

\smallskip\eject

\noindent {\em Step 1: Stackable.}

Define a function $\sff: \nf_G \times C \rightarrow C^*$ by
$$
\sff(y_g, c) = \left\lbrace
\begin{array}{l l }
\sff_{K}(y_g,c) 
  & \mbox{if } c \in A \mbox{ and }  y_g  \in A^*
\\
\last(y_g)^{-1}u_{\last(y_g)c\last(y_g)^{-1}}\last(y_g)
  & \mbox{if }  c \in A \text{ and } y_g \notin A^*
\\
u_{c (\widehat{\sff_Q(q (g), q(c))})^{-1} }  \widehat{\sff_Q(q(g),q(c))} 
  & \mbox{if } c \in \hat{B}.
\end{array} \right.
$$
Also define $\ff:\vece \ra \vecp$ by
$\ff(e_{g,c}):=\path(y_g,\sff(y_g,c))$.


It follows immediately from 
the definition of $\ff$ and
the presentation of $G$ that
property (F1) of the definition of flow function holds.
To check property (F2d), suppose that $e=e_{g,c}$ is any
edge in the tree $\ttt$;
then either $y_gc$ or $y_{gc}c^{-1}$ lies in $\nf_G$.
If $c \in A$, then the definition of
$\nf_G$ implies that $y_g \in A^*$,
and either $y_gc$ or $y_{gc}c^{-1}$ lies in $\nf_K$.
Then $\sff_G(y_g,c)=\sff_K(y_g,c)=c$, and so $\ff(e)=e$.
If instead $c \in \hb$,
then either $u_gt_gc$ or $u_{gc}t_{gc}c^{-1}$ lies
in $\nf_G$.
Now $\overline{t_g} \in \nf_Q$,
$\overline{c} \in B$, and either
$\overline{t_g}\overline{c}$ or $\overline{t_{gc}}\overline{c}^{-1}$
lies in $\nf_Q$.
Hence property (F2d) for $\ff_Q$ implies that  
$\sff_Q(q(g),q(c))=\sff_Q(\overline{t_g},\overline{c})=\overline{c}$, 
and so $\widehat{\sff_Q(q(g),q(c))}=c$.  Note that
$u_{cc^{-1}}=u_{\gi}=\ew$.
Then again we have
$\sff(y_g,c)=c$ and $\ff(e)=e$.  Hence
property (F2d) holds for $\ff$.

Our procedure to check
 property (F2r) for $\ff$
will again (as in the proof of Theorem~\ref{thm:grprod})
make use of a function
$\psi:\vec E \ra \N^2$
that captures information 
from property (F2r) for the flow functions
$\ff_K$ and $\ff_Q$.
Let $\vec E_K=\{e_{k,a}^K \mid k \in K, a \in A\}$ 
be the set of directed edges
of the Cayley graph of $K$ over $A$, and define
$\redl_K:\vec E_K \ra \N$ by 
$$
\redl_K (e_{k, a}^K) = \mbox{maximal length of a descending chain } 
  e_{k,a}^K >_{\ff_K} e' >_{\ff_K} e'' \cdots.
$$ 
Similarly let $\vec E_Q=\{e_{q,b}^Q \mid q \in Q, b \in B\}$
 be the set of directed edges
of the Cayley graph of $Q$ over $B$, and let
$\redl_Q(e_{q,b}^Q)$  be the maximum length
of a  descending chain starting at $e_{q,b}^Q$
for the well-founded strict partial ordering $>_{\ff_Q}$.
Now define $\psi : \vec{E} \rightarrow \N^2$
on the directed edges of the Cayley graph of 
$G$ over $C$ by
$$
\psi (e_{g, c}) = \left\lbrace 
\begin{array}{ll}
(1,\redl_K(e_{g,c}^K) )
  & \mbox{if } c \in A \mbox{ and }  y_g \in A^*
\\
(2, \ell (t_g)) 
  &\mbox{if } c \in A \mbox{ and }  y_g \notin A^*
\\
(3, \redl_Q(e_{q(g),q(c)}^Q)) 
  & \mbox{if }  c \in \hat{B}
\end{array}\right.
$$
for all $g \in G$ and $c \in C$.
To prove property (F2r), it now suffices
to show  that  $e'<_\ff e$ implies
$\psi(e') <_{\N^2} \psi(e)$ (where $<_{\N^2}$
is the lexicographic ordering) whenever $e',e$ do not
lie in $\ttt$ and $e'$ is a directed edge on the
path $\ff(e)$.

To that end, let $e=e_{g,c}$ be any directed edge in $\ga$
that does not lie in the tree $\ttt$.

\noindent {\em Case 1: Suppose that $c \in A$ and $y_g \in A^*$.}  

Each edge $e'$ in the path $\ff(e)=\path(y_g,\sff_K(y_g,c))$ has the 
form $e'=e_{g',c'}$ with $y_{g'} \in A^*$ and
$c' \in A$.  Moreover, the edge $e_{g',c'}^K$ lies 
in the path $\ff_K(e_{g,c}^K)$,
and so $e_{g',c'}^K <_{\ff_K} e_{g,c}^K$
and $\redl_K(e_{g',c'}^K)) \le \redl_K(e_{g,c}^K)-1$.
Now $\psi(e')=(1,\redl_K(e_{g',c'}^K))
<_{\N^2} (1,\redl_K(e_{g,c}^K))=\psi(e)$.

\noindent {\em Case 2: Suppose that $c \in A$ and $y_g \notin A^*$.} 

The first edge in the path 
$\ff(e)=\path(y_g,\last(y_g)^{-1}u_{\last(y_g)c\last(y_g)^{-1}}\last(y_g))$ is
$e_{y_g,\last(y_g)^{-1}}$;
since this is also
the last edge of $\path(\gi,y_g)$,
this first edge lies in the tree $\ttt$.
Similarly, the last edge in $\ff(e)$ is
$e_{g\last(y_g)^{-1}u_{\last(y_g)c\last(y_g)^{-1}},\last(y_g)}$,
and the terminal vertex of this edge is
$gc =_G 
[u_gt_gct_g^{-1}]t_g=kt_g$ where $k=u_gt_gct_g^{-1}$
lies in $K$.
Then the normal form of 
$gc$ 
can be written as $u_kt_g$, 
ending in the letter $\last(y_g)$,
and so this last edge also lies in $\ttt$.

Now suppose that $e'$ is any edge in the
path $\ff(e)$ that does not lie in $\ttt$.  
Then $e'=e_{g',c'}$ satisfies 
$c' \in A$ and $g' \in G$ has a normal form
$u_{g'}t_{g'}$ with $t_g=t_{g'}\last(y_g)$.
In this case either $t_{g'}=\ew$ and 
$\psi(e')=(1,\redl_K(e_{g',c'}^K))$,
or $t_{g'} \neq \ew$ and
$\psi(e')=(2,l(t_{g'}))=(2,l(t_g)-1)$.
In either subcase, $\psi(e') <_{\N^2} (2,l(t_g))=\psi(e)$.

\noindent {\em Case 3: Suppose that $c \in \hb$.}

In this last case $\psi(e) = (3,\redl_Q(e_{q(g), q(c)}^Q))$.
The path
$\ff(e)=\path(y_g,u_{c (\widehat{\sff_Q(q (g), q(c))})^{-1} }
   \widehat{\sff_Q(q(g),q(c)))}$
is the concatenation of two paths 
$\rho_1 = \path(y_g,u_{c (\widehat{\sff_Q(q (g), q(c))})^{-1} })$
and
$\rho_2= \path(y_{g'},\widehat{\sff_Q(q(g),q(c))})$
where $g':= g u_{c (\widehat{\sff_Q(q(t_g), q(c))})^{-1}}$ is the
vertex at the terminus of $\rho_1$ and the
start of $\rho_2$.

For any edge $e'$ in the first subpath $\rho_1$, the
label on the edge $e'$ is an element of
$A$, and so 
$\psi(e')=(m,n)$ with $m<3$.
Hence $\psi(e') <_{\N^2} \psi(e)$.

To analyze the situation for an edge $e'$ of
the path $\rho_2$, we first note that 
the initial vertex $g'$ of $\rho_2$
satisfies $q(g')=q(g)$.
Then $e'$
has the form $e'=e_{u'\widehat{t'},\widehat{c'}}$ for some $u' \in \nf_K$ and
some edge $r(e'):=e_{t',c'}^Q$ of the path $\ff_Q(e_{q(g),q(c)})$.
Then
$\redl_Q(r(e')) < \redl_Q(e_{q(g),q(c)})$,
and
$\psi(e')=( 3,\redl_Q(e_{q(u'\widehat{t'}),q(\widehat{c'})}) )
=( 3,\redl_Q(e_{t',c'}) ) <_{\N^2}
( 3,\redl_Q(e_{q(g),q(c)}) ) = \psi(e)$.

\smallskip

This completes the proof of property (F2r) for $\ff$, 
and so $\ff$ is a flow function. Let $k_K$ and $k_Q$ be
the bounds on the flow functions $\ff_K$ and $\ff_Q$.
Let $M:=\max\{l(u_{dcd^{-1}}) \mid d \in \hb$ and $c \in A\}$,
and $m:=\max\{l(u_{cz}) \mid c \in \hb$ and $z$ is in the (finite)
image of $\sff_Q\}$.
Then $\max\{k_K,2+M,k_Q+m\}$ is a bound for the flow function $\ff$.

\smallskip

\noindent {\em Step 2:  Autostackable.}

In this step we assume that the groups $K$ and $Q$ are autostackable,
and in particular that the sets $graph(\sff_K)$ and $graph(\sff_Q)$
are \sr.  
We partition the finite image sets
$im(\sff_K) \subset A^*$ and $im(\sff_Q) \subset B^*$
as follows.  For each $c \in A$, let
$U_c :=\{\sff(y,c) \mid y \in \nf_K\}$, and
for each $c \in \hb$, let
$V_c := \{\sff(y,\overline{c}) \mid y \in \nf_Q\}$;  that
is, $U_c$ is the finite set of labels on paths
obtained from the flow function $\sff_K$ action on
edges with label $c$, and similarly for $V_c$.

The stacking function associated to the bounded flow
function $\ff$ for $G$ from Step 1 of this proof 
is the function $\sff$ defined in Step 1.
Using the piecewise definition of $\sff$, we have 
\begin{eqnarray*}
graph(\sff) &=& \hspace{.16in} (\cup_{c \in A,~z \in U_c}
       ~L_{c,z}   \times \{c\}        \times \{z\})\\
&& \medcup (\cup_{c \in A,~b \in \hb} 
       ~L_{c,b}'  \times \{c\}        \times \{b^{-1}u_{bcb^{-1}}b\}) \\
&& \medcup (\cup_{c \in \hb,~z \in V_c} 
       ~L_{c,z}'' \times \{c\}        \times \{u_{c\hat{z}}\hat{z}\}) ,
\end{eqnarray*}
where 

\hspace{.1in} $L_{c,z}=\{y_g \in \nf_G \mid y_g \in A^*$ and $\sff_K(y_g,c)=z\}$,

\hspace{.1in} $L_{c,b}'=\{y_g \in \nf_G \mid y_g \notin A^*$ and $\last(y_g) = b\}$,~~and

\hspace{.1in} $L_{c,z}''=\{y_g \in \nf_G \mid \sff_Q(q(y_g),q(c))=z\}$.

The first language $L_{c,z}$ is the set
${\mathsf{proj}}_1(graph(\sff_K) \cap (A^* \times \{c\} \times \{z\}))$.
Synchronous regularity 
of both sets in the intersection, along with
Lemma~\ref{lem:proj}, shows that $L_{c,z}$ is regular.
The second language is $L_{c,b}'=\nf_G \cap C^*b$.
Now $\nf_K={\mathsf{proj}}_1(graph(\sff_K))$ and
$\hq$ is a homomorphic image (via 
the map $\hat{~}$) of ${\mathsf{proj}}_1(graph(\sff_Q))$,
so these languages, as well as their concatenation $\nf_G$,
are regular, and therefore so is $L_{c,b}'$.
Finally, $L_{c,z}''$ is the concatenation
$L_{c,z}''=\nf_K(\widehat{{\mathsf{proj}}_1(graph(\sff_Q) \cap 
           (B^* \times \{q(c)\} \times \{z\}))})$;
similar arguments show that $L_{c,z}''$ is also regular.
Using the closure of \sr\ languages under finite
unions and
Lemma~\ref{lem:product} now shows that $graph(\sff)$ is
also regular.
Thus $G$ is autostackable over $C$.

\smallskip

\noindent {\em Step 3:  Algorithmically stackable.}

Again the proof in this step is nearly identical to
the proof of Step 2.
\end{proof}



\subsection{Finite index supergroups}\label{subsec:finext}


$~$

\vspace{.1in}

In this section we show that a group containing a
stackable, algorithmically stackable or
autostackable finite index subgroup must also 
have the same property.
While there are many similarities with the 
result and proof in  Section~\ref{sec:ext},
and so we do not include all of the details
of the proof,
the argument in the present section 
requires a different flow function because
we do not require the subgroup to be normal.

\begin{theorem}\label{thm:finext}
Let  $H$ be an
autostackable
[respectively, stackable, \astkbl] group on a
finite \sym\ generating set $A$,
let $G$ be a group containing $H$ as a subgroup of
finite index, and let $S \subseteq G$ be
a set of coset representatives for $G/H$ containing $\gi$.
Then the group $G$ with the generating set 
$A \cup (S \setminus \{\gi\})^{\pm 1}$
is also autostackable
[respectively, stackable, \astkbl]. 
\end{theorem}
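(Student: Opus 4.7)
The plan is to follow the structure of Theorem~\ref{thm:extn}, but to replace the normality-based flow function with one that encodes the action of generators on coset representatives: because $H$ need not be normal, applying a letter $c$ at $g = u_g t_g$ can permute $t_g$ to a different element of $S$ rather than only producing a correction inside $H$. Let $T := S \setminus \{\gi\}$ and $C := A \cup T^{\pm 1}$, and let $\nf_H$, $\ff_H$, $\sff_H$ be the normal form set, bounded flow function, and associated stacking map for $H$ over $A$. Since every $g \in G$ factors uniquely as $g = h s$ with $h \in H$ and $s \in S$, the set
$$
\nf_G := \nf_H \cdot (T \cup \{\ew\})
$$
is a prefix-closed set of normal forms for $G$ over $C$; for $g \in G$ write $y_g = u_g t_g$ with $u_g \in \nf_H$ and $t_g \in T \cup \{\ew\}$, and let $\ttt$ be the resulting spanning tree in the Cayley graph $\ga$ of $G$ over $C$.

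For each $(s,c) \in S \times C$ there is a unique factorization $sc = \eta(s,c)\, \tau(s,c)$ with $\eta(s,c) \in H$ and $\tau(s,c) \in S$. Since $S \times C$ is finite, I fix once and for all a word $w_{s,c} \in A^*$ representing $\eta(s,c)$, with $w_{s,c} := \ew$ whenever $\eta(s,c) = \gi$, and set $M := \max_{s,c} l(w_{s,c})$. Define
$$
\sff(y_g, c) := \begin{cases}
\sff_H(y_g, c) & \text{if } g \in H \text{ and } c \in A, \\
t_g^{-1}\, w_{t_g, c}\, \tau(t_g, c) & \text{otherwise},
\end{cases}
$$
under the conventions $\ew^{-1} := \ew$ and $\tau(s,c) := \ew$ when that value equals $\gi$, and set $\ff(e_{g,c}) := \path(y_g, \sff(y_g, c))$. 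Property (F1) is the identity $gc = u_g\, \eta(t_g, c)\, \tau(t_g, c)$. For (F2d), $H$-spanning-tree edges are handled directly by (F2d) for $\ff_H$, while for the tree edges $e_{u,t}$ and $e_{ut, t^{-1}}$ between cosets ($u \in H$, $t \in T$) the identities $\eta = \gi = \tau$ (hence $w = \ew$) force $\sff = c$ from the ``otherwise'' formula.

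For (F2r), define $\psi:\vec E \to \N^2$ (lexicographic order) by $\psi(e_{g,c}) := (0, \redl_H(e_{g,c}^H))$ when $g \in H$ and $c \in A$, and $\psi(e_{g,c}) := (1, 0)$ otherwise, where $\redl_H$ is the descending-chain length function for $\ff_H$ as in Theorem~\ref{thm:extn}. In the ``otherwise'' case, $\sff(y_g, c)$ is the concatenation of a (possibly vacuous) tree edge $e_{g, t_g^{-1}}$, a walk inside the coset $u_g H$ labeled by $w_{t_g, c}$, and at most one tree edge labeled $\tau(t_g, c) \in T$; every non-tree edge of this walk falls into the first case of $\psi$ and so has $\psi$-value $(0,\ast) <_{\N^2} (1,0)$. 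In the first case of $\psi$, (F2r) for $\ff_H$ transfers directly. Thus $<_\ff$ is well-founded and $\ff$ is bounded by $\max\{bd(\ff_H),\, M+2\}$. For Step~2, $graph(\sff)$ decomposes as the union of $graph(\sff_H)$, products $\nf_H \times \{c\} \times \{w_{\gi,c}\tau(\gi,c)\}$ for $c \in T \cup T^{-1}$, and products $(\nf_H \cdot \{t^\ast\}) \times \{c\} \times \{(t^\ast)^{-1} w_{t^\ast, c} \tau(t^\ast, c)\}$ for $t^\ast \in T$, $c \in C$. Since $\nf_H = {\mathsf{proj}}_1(graph(\sff_H))$ is regular by Lemma~\ref{lem:proj}, each factor language is regular, so the products are \sr\ by Lemma~\ref{lem:product}, and closure of \sr\ languages under finite union shows that $graph(\sff)$ is \sr. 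Step~3 (algorithmic stackability) is obtained by replacing ``\sr'' with ``computable'' throughout.

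The main obstacle is Step~1, and specifically the design of $\psi$. Without normality, the suffix-length invariant $l(t_g)$ used in Theorem~\ref{thm:extn} does not decrease when $c$ sends $t_g$ to an unrelated element $\tau(t_g, c) \in S$, so one must instead collapse all edges outside ``$g \in H$ and $c \in A$'' into a single level of the well-founded hierarchy and exploit the fact that each such edge flows to a concatenation consisting only of tree edges and a walk that lies inside a single $H$-coset.
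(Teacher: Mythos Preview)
Your proof is correct and follows essentially the same route as the paper: identical normal forms $\nf_H\cdot(T\cup\{\ew\})$, the same stacking map (use $\sff_H$ inside $H$; otherwise retract by $t_g^{-1}$, walk in $H$ along a fixed word for $\eta(t_g,c)$, then step out by $\tau(t_g,c)$), and the same two-level well-foundedness argument via $\psi$. The only differences are cosmetic---your two-case $\psi$ harmlessly collapses the paper's values $(1,0)$ and $(1,1)$ into one, and you allow arbitrary fixed words $w_{s,c}$ where the paper uses the normal forms $u_{sc}$; note a small slip in your (F2d) check: for the tree edge $e_{u,t}$ one has $\tau(\gi,t)=t\neq\gi$, though $\sff=c$ still follows since $t_g=\ew$ and $w_{\gi,t}=\ew$ there.
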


\begin{proof}
Let $\nf_H$, $\ff_H$, and $\sff_H$ be the normal form
set, bounded flow function, and associated stacking
map for $H$ over $A$, and let
$H = \langle A | R \rangle$
be the finite presentation obtained from 
this flow function. 
Let $B := (S \setminus \{\gi\})^{\pm 1}$ and 
let $C := A \cup B$.  

Since $S$ is a transversal, the set
$$\nf_G := \nf_H \cup \{ ut | u \in \nf_H, t \in S \setminus \{\gi\} \}$$
is a set of normal forms for $G$ over $C$.
Moreover, prefix-closure of the language $\nf_H$ 
implies that $\nf_G$ is also prefix-closed.

Given any $g \in G$, we write the
normal form from $\nf_G$ for $g$ as $y_g=u_gt_g$ 
where $u_g \in \nf_H$ and 
$t_g \in \{\ew\} \cup (S \setminus \{\gi\})$.
Using this notation, the group $G$ is presented by\\
\-\ 
\hspace{.53in}
$G 
= \langle C \mid R \cup 
  \{ x = u_x t_x \mid x \in B \setminus S \} \cup 
  \{xy = u_{xy} t_{xy} \mid x \in B, y \in C \}\rangle.
$\\
Let $\ga$ be the Cayley graph for $G$ with respect to $C$,
with sets $\vec E$ and $\vec P$ of directed edges and paths,
and let $\ttt$ be the maximal tree in $\ga$
determined by the set $\nf_G$ of normal forms.

\smallskip

\noindent {\em Step 1:  Stackable.}

Define a function $\sff : \nf_G \times C \ra C^*$ by 
\begin{center}
$\sff(y_g,c) = \left\lbrace
\begin{array}{ll}
\sff_H(y_g,c) 
  & \mbox{if } c \in A \mbox{ and } y_g \in A^*
\\
y_c 
  & \mbox{if } c \in B \mbox{ and } y_g \in A^*
 \\
\last(y_g)^{-1}  y_{\last(y_g)c}           
  & \mbox{if } c \in C \mbox{ and } y_g \notin A^*
\end{array} \right.$
\end{center}
Also as usual define $\ff:\vece \ra \vecp$ by
$\ff(e_{g,c}):=\path(y_g,\sff(y_g,c))$.

It is again immediate from the definition of $\ff$ that
property (F1) of the definition of flow function holds.
To check property (F2d), suppose that $e=e_{g,c}$ is any
edge in the tree $\ttt$.  
Then either
$y_gc$ or $y_{gc}c^{-1}$ lies in $\nf_G$.
If $c \in A$, then $y_g \in A^*$
and either $y_gc$ or $y_{gc}c^{-1}$ is in $\nf_H$.
Then property (F2d) for $\sff_H$
implies that $\sff_G(y_g,c)=\sff_H(y_g,c)=c$.  
If $c \in B$ and
$y_g \in A^*$, then 
$y_g \neq y_{gc} c^{-1}$,
and consequently
$y_gc \in \nf_G$.  Then $c \in S \setminus \{\gi\}$
and $\sff_G(y_g,c)=y_c=c$.
Finally, if $c \in B$ and $y_g \notin A^*$,
then $y_gc \notin \nf_G$, and so $y_{gc}c^{-1} \in N_G$.
In this case 
$\last(y_g)=t_g=c^{-1} \in S \setminus \{\gi\}$,
and $\sff_G(y_g,c)=\last(y_g)^{-1}  y_{\last(y_g)c}
=cy_1=c$.   
Then in all cases we have $\ff(e)=e$;
therefore (F2d) holds for $\ff$.

Next define $\psi: \vec E \rightarrow \N^2$ by
$$
\psi (e_{g, c}) = \left\lbrace 
\begin{array}{ll}
(0,\redl_H (e_{g,c}^H)) 
  & \mbox{if } c \in A \mbox{ and } y_g \in A^* 
\\
(1,0) 
  & \mbox{if } c \in B \mbox{ and } y_g \in A^*
\\
(1, 1) 
  & \mbox{if } c \in C \mbox{ and } y_g \notin A^*
 \\
\end{array}\right.
$$
where (as in the proof of Theorem~\ref{thm:extn}),
for an edge $e_{g,c}^H$ in the Cayley graph of
$H$ over $A$, $\redl_H (e_{g,c}^H) \in \N$ is the
maximal length of a descending chain 
$e_{g,c}^H >_{\ff_{H}} e' >_{\ff_{H}} e'' \cdots$.
As usual, let $<_{\N^2}$ be the lexicographic order on
$\N^2$.

Let $e=e_{g,c}$ be any directed edge in $\ga$
whose underlying undirected edge is not in $\ttt$.

\noindent {\em Case 1: Suppose that $c \in A$ and $y_g \in A^*$.}  

The proof in this case is similar to Case 1 of Step 1
in the proof of Theorem~\ref{thm:extn}.

\noindent {\em Case 2: Suppose that $c \in B$ and $y_g \in A^*$.}  

In this case $\psi(e)=(1,0)$, and the path $\ff(e)$ is
labeled by the word $y_c$.
Since the edge $e$ is not in $\ttt$, the word
$y_gc \notin \nf_G$, and so
$c \in B \setminus S$ and $y_c=u_ct_c$ for some
$u_c \in \nf_H$ and $t_c \in S \setminus \{\gi\}$.
Each edge $e'$ in the subpath $\path(y_g,u_c)$ of $\ff(e)$
satisfies $\psi(e')=(0,n)$ for some $n \in \N$,
and so $\psi(e') <_{\N^2} \psi(e)$.
The final edge $e_{gu_c,t_c}$ of
$\ff(e)$ lies in the tree $\ttt$.

\noindent {\em Case 3: Suppose that $c \in C$ and $y_g \notin A^*$.}

In this case $\psi(e)=(1,1)$, and the path $\ff(e)$ is
labeled by the word $\last(y_g)^{-1}  y_{\last(y_g)c}$.
The first edge $e_{g,\last(y_g)^{-1}}$ lies in $\ttt$.
Since $\last(y_g)=t_g$,
the next subpath of $\ff(e)$ is 
$\path(u_g,u_{t_gc})$.
Any edge $e'$ in this subpath satisfies
$\psi(e')=(0,n)$ for some $n \in \N$; 
hence $\psi(e') <_{\N^2} \psi(e)$.
The remainder of the path $\ff(e)$ is
the edge $e_{u_gu_{t_gc},t_{t_gc}}$,
which lies in the tree $\ttt$.

\smallskip

We now have that $\psi(e') <_{\N^2} \psi(e)$
whenever $e' <_{\ff} e$, and so property (F2r)
holds and $\ff$ is a flow function.
Finally, since $A$, $B$, $C$, and the image
$im(\sff_H)$ of the stacking map for $H$ 
are finite sets, the flow function $\ff$ is
bounded.  Therefore $G$ is stackable over $C$.

\smallskip

\noindent{\em Step 2:  Autostackable and algorithmically stackable.}

The map $\sff$ from Step 1 is the stacking map for
the flow function $\ff$ for $G$, and the graph
of this function can be decomposed as a finite union of sets
\begin{eqnarray*}
graph(\sff) &=& \hspace{.16in}(\cup_{c \in A,~z \in \{\sff_H(y,c) \mid y \in \nf_H\}}
       ~{\mathsf{proj}}_1(graph(\sff_H) \cap (A^* \times \{c\} \times \{z\}))
          \times \{c\}        \times \{z\})\\
&& \medcup (\cup_{c \in B} 
       ~\nf_H  \times \{c\}        \times \{y_c\}) \\
&& \medcup (\cup_{c \in C,~s \in S \setminus \{\gi\}} 
       ~\nf_Hs \times \{c\}        \times \{s^{-1}y_{sc}\}).
\end{eqnarray*}
With the added assumption that $graph(\sff_H)$
is either \sr\ or computable, then
$graph(\sff)$ satisfies the same property.
\end{proof}


\section{Homological finiteness} \label{sec:stallings}


The purpose of this section is to investigate the homological 
properties of autostackable groups by studying 
Stallings'~\cite{Stallings} non-$FP_3$ group
$$
G := \langle a,b,c,d,s \mid [a,c] = [a,d] = [b,c] = [b,d] = 1, 
[s, ab^{-1}] = [s, ac^{-1}] = [s, ad^{-1}] = 1\rangle
$$
with respect to the generating set 
$A := \{ a^{\pm 1}, b^{\pm 1}, c^{\pm 1}, d^{\pm 1}, s^{\pm 1} \}$.  

The group $G$ is an HNN extension, with stable
letter $s$, of the 
direct product of two free groups of
rank 2, 
$$H = F_2 \times F_2 =
\langle a,b,c,d \mid [a,c] = [a,d] = [b,c] = [b,d] = 1\rangle
$$
generated by the subset 
$Z := \{ a^{\pm 1}, b^{\pm 1}, c^{\pm 1}, d^{\pm 1}\}$.
Since the relations of this presentation have zero
exponent sum as words over $Z$, given any element $h \in H$,
there is a unique number $\es(h)$ such that
every word over $Z$ representing $h$ has exponent
sum $\es(h)$.
Let $N$ denote the subgroup of $H$
of elements of zero exponent sum. Then $N$ is
a normal subgroup (as conjugation preserves
exponent sum) of $H$, and $N$ is
generated by
$ab^{-1}$, $ac^{-1}$, and $ad^{-1}$.  In the
HNN extension $G$, conjugation
by the stable letter $s$ is the identity
map on $N$.  (See for example~\cite{elderherm}
for more details.)

The set $\{a^i \mid i \in \Z\}$ is 
both a left transversal and right transversal of $N$ in $H$.
Let 
$$
\nf_H := \{uv \mid u \in \{a^{\pm 1}, b^{\pm 1}\}^*
\text{ and } v \in \{c^{\pm 1}, d^{\pm 1}\}^* 
\text{ are freely reduced} \};
$$
this is a set of normal forms 
for $H$ over $Z$.  Then 
\[
\begin{array}{rl}
\nf_G  := \{ ws^{\epsilon_1} a^{i_1} s^{\epsilon_2}a^{i_2} 
\cdots s^{\epsilon_n}a^{i_n} \mid &
 w \in \nf_H,~n\geq 0,~\epsilon_j \in \{\pm 1\} \text{ and } 
i_j \in \mathbb{Z} \text{ for all } j, \\
&\mbox{ and whenever } i_j = 0,\mbox{ then }
\epsilon_{j} = \epsilon_{j+1} \}
\end{array}
\]
is a set of normal forms for $G$ over $A$
(using normal forms for 
HNN extensions; for example, 
see~\cite[Theorem~IV.2.1]{LyndonSchupp}).
In fact, the set $\nf_G$ is the set of irreducible
words of the (infinite) complete rewriting system
\[
\begin{array}{rl}\label{rs:stallings}
R_G  = & \{xx^{-1} \ra \ew \mid x \in A\}~\cup~\{yx \ra xy \mid 
      x \in \{a^{\pm 1}, b^{\pm 1}\},~y \in \{c^{\pm 1}, d^{\pm 1}\}\} \\
&  \cup~\{s^\epsilon a^i y^\eta \ra a^{-\eta}y^\eta s^\epsilon a^{\eta+i} \mid
      \epsilon,\eta \in \{\pm 1\},~i \in \Z,~y \in \{c, d\}\} \\
&  \cup~\{s^\epsilon a^i b^\eta \ra a^{i}b^\eta a^{-\eta-i} s^\epsilon a^{\eta+i} \mid
      \epsilon,\eta \in \{\pm 1\},~i \in \Z\}\}
\end{array}
\]
for $G$ over $A$.
We also note that the language $\nf_G$ is prefix-closed,
and so determines a maximal tree in the Cayley
graph for $G$ over $A$.

\begin{theorem}\label{thm:stallings}
Stallings' non-$FP_3$ group $G$ is autostackable.
\end{theorem}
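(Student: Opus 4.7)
The plan is to construct a bounded flow function $\ff$ on the maximal tree $\ttt$ in $\ga$ determined by the prefix-closed set $\nf_G$ listed above, and to verify that the graph of the associated stacking map $\sff:\nf_G\times A\to A^*$ is synchronously regular. The displayed infinite rewriting system $R_G$ already identifies $\nf_G$ as its set of irreducibles, but its rules are parametrized by $i\in\Z$ and are not length-bounded; the main work is therefore to replace them by a bounded, synchronously regular family of stacking moves with the same irreducibles.

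I would define $\sff(y_g,x)$ by cases on the last letter $\ell=\last(y_g)$ of $y_g=ws^{\epsilon_1}a^{i_1}\cdots s^{\epsilon_n}a^{i_n}$, and, where relevant, on the sign $\epsilon_n$ of the last $s$-letter. Tree-edge cases ($y_gx\in\nf_G$, or $\ell=x^{-1}$) get $\sff(y_g,x)=x$. The remaining cases fall into four bounded-output families:
\begin{itemize}
\item[(i)] intra-$H$ commutation: if $\ell\in\{c^{\pm 1},d^{\pm 1}\}$ and $x\in\{a^{\pm 1},b^{\pm 1}\}$, set $\sff(y_g,x)=\ell^{-1}x\ell$;
\item[(ii)] commutation past a trailing $a$: if $\ell=a^{\pm 1}$ (as part of a nonempty tail $a^{i_n}$) and $x\in\{c^{\pm 1},d^{\pm 1}\}$, set $\sff(y_g,x)=\ell^{-1}x\ell$;
\item[(iii)] local HNN push: if $\ell=s^{\epsilon_n}$ (so $i_n=0$) and $x\in\{b^{\pm 1},c^{\pm 1},d^{\pm 1}\}$, prepend $s^{-\epsilon_n}$ to the $i=0$ instance of the displayed $R_G$-rule to obtain a length-$5$ rewrite;
\item[(iv)] peel-one-$a$: if $\ell=a^{\pm 1}$ (part of a nonempty tail $a^{i_n}$) and $x=b^{\pm 1}$, use the identity $a^{\eta_n}b^\eta a^{-\eta_n-\eta}\in N$ together with $[s,N]=1$ to build a bounded word that trades one $a$ of the tail for a single $N$-conjugation pushed across $s^{\epsilon_n}$.
\end{itemize}
I would then check the flow-function axioms: (F1) is automatic, (F2d) is a finite case-check using the definition of $\nf_G$, and (F2r) is established by introducing a lexicographic weight $\psi:\vec E\to\N^3$ whose coordinates measure, in order, the number of $s$-letters that $x$ still must ``pass'' before settling into the $\nf_H$-prefix, the absolute value $|i_n|$ of the tail exponent, and the inherited local descending-chain depth. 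Each of (i)--(iv) is designed so that every non-tree edge on $\ff(e_{g,x})$ has strictly smaller $\psi$ than $e_{g,x}$.

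For synchronous regularity of $graph(\sff)$, I would first check that $\nf_G$ is regular: it is the intersection of the freely-reduced language on $A$, the language enforcing the $\{a,b\}^{\pm 1}$-before-$\{c,d\}^{\pm 1}$ order inside the $\nf_H$-prefix, and the local condition forbidding $ss^{-1}$ and $s^{-1}s$ as subwords (all local factor conditions). Each rule (i)--(iv) is triggered by a regular condition on the last letter of $y_g$ (and, in (iii) and (iv), on the last $s$-sign, which is also regularly extractable since it is determined by the last $s$-letter and the subsequent $a^{\pm 1}$-run), and emits a word from a finite set of bounded templates. Thus $graph(\sff)$ decomposes as a finite union of sets of the form $L\times\{x\}\times\{w\}$ with $L\subseteq\nf_G$ regular, and Lemma~\ref{lem:product} then gives synchronous regularity.

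The main obstacle is case (iv): $a$ and $b^{\pm 1}$ do not commute in $H$, and the $R_G$-rule $s^\epsilon a^ib^\eta\to a^ib^\eta a^{-\eta-i}s^\epsilon a^{\eta+i}$ has output length linear in $|i|$. I expect the delicate part to be picking a bounded rewrite word $\sff(y_g,b^\eta)$ and a weight $\psi$ so that each application of (iv) strictly decreases $|i_n|$ while preserving the overall structure needed for the well-ordering, until case (iii) can take over once $i_n=0$. Achieving (F2r) and bounded output simultaneously in this HNN-push case is the technical heart of the argument.
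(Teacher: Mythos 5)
Your architecture matches the paper's almost exactly: the same normal form set $\nf_G$ and tree $\ttt$, the same four ``easy'' families (tree edges; $\{a,b\}^{\pm 1}$ commuting past a trailing $\{c,d\}^{\pm 1}$ in the $\nf_H$-prefix; $\{c,d\}^{\pm 1}$ commuting past a trailing $a$-block; the length-$5$ rewrite $s^{-\epsilon}xa^{-\eta}s^{\epsilon}a^{\eta}$ when the normal form ends in $s^{\pm 1}$), a lexicographic weight on $\N^3$ of essentially the same shape, and the same strategy for synchronous regularity (decompose $graph(\sff)$ into finitely many pieces $L\times\{x\}\times\{w\}$ with $L$ regular, then apply Lemma~\ref{lem:product}). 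But your case (iv) --- the only genuinely hard case, as you yourself say --- is not actually resolved. You gesture at using $a^{\eta_n}b^{\eta}a^{-\eta_n-\eta}\in N$ and $[s,N]=1$ to ``trade one $a$ of the tail for a single $N$-conjugation pushed across $s^{\epsilon_n}$,'' but you never write down the word $\sff(y_g,b^{\pm 1})$, and the natural thing one obtains from $[s,N]=1$ is precisely the unbounded rule of $R_G$ you are trying to avoid: the conjugating power of $a$ needed to land in $N$ depends on the full exponent $i_n$ (indeed on $i_1+\cdots+i_n$), not just on its sign. Admitting that ``achieving (F2r) and bounded output simultaneously in this HNN-push case is the technical heart of the argument'' is an accurate diagnosis, but it leaves the theorem unproved.

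The paper's resolution is simpler than what you are reaching for and does not invoke $[s,N]=1$ at this stage at all: when $y_g\notin Z^*$, $\last(y_g)=a^{\eta}$, and $x\in\{b^{\pm 1}\}$, set $\sff(y_g,x)=c^{-\eta}xc^{\eta}$. This is a valid relation since $[b,c]=1$ in $H$, and it is bounded of length $3$. Its three edges are handled as follows: the two $c^{\mp\eta}$-edges fall into your already-established case (ii) and carry weight $(n_s,|i_n|,0)$, strictly below the weight $(n_s,|i_n|,1)$ assigned to $b$-edges; and the middle $b$-edge starts at $gc^{-\eta}$, whose normal form has trailing block $a^{i_n-\eta}$ with $|i_n-\eta|=|i_n|-1$ (because pushing $c^{-\eta}$ leftward through $s^{\epsilon_n}a^{i_n}$ via the rule $s^{\epsilon}a^{i}y^{\eta'}\ra a^{-\eta'}y^{\eta'}s^{\epsilon}a^{\eta'+i}$ shortens the tail by one). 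So the $b$-obstruction is converted into a $c$-obstruction plus a $b$-edge with a strictly shorter $a$-tail, and the third coordinate of $\psi$ (your ``$0$ versus $1$'' flag distinguishing $c,d$-edges from $b$-edges) is exactly what makes the induction close. One smaller point: your description of $\nf_G$ as regular via ``local factor conditions'' omits the constraint that no letter of $\{b,c,d\}^{\pm 1}$ may occur after the first $s^{\pm 1}$ (the paper excludes the factors $s^{\pm 1}(a^{*}\cup(a^{-1})^{*})\{b^{\pm 1},c^{\pm 1},d^{\pm 1}\}$); this is still regular, so the conclusion stands, but the condition as you stated it does not cut out $\nf_G$.
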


\begin{proof}
Let $\nf_G$ be the normal form set for $G$
over the generating set $A$ described above, and denote the normal form
for any element $g \in G$ by $y_g$.
Let $\ga$ be the Cayley graph of $G$ over $A$, with
sets $\vece$ and $\vec P$ of directed
edges and paths, respectively, and let
$\ttt$ be the tree in $\ga$ corresponding to the set $\nf_G$ of
normal forms.

\smallskip\eject

\noindent {\em Step 1:  Stackable.}

Define a function
$\sff : \nf_G \times A \rightarrow A^*$ by
\[
\sff(y_g,x) := \begin{cases}
x & \mbox{if either } y_g x \in \nf_G \text{ or } y_{gx}x^{-1} \in \nf_G
\\
\last(y_g)^{-1}x\last(y_g) & 
       \mbox{if } 
        x \in \{a^{\pm 1},~b^{\pm 1}\},~y_g \in Z^*, \text{ and } 
       \last(y_g) \in \{c^{\pm 1}, d^{\pm 1}\}
\\
\last(y_g)^{-1}x\last(y_g) & 
       \mbox{if }
       x \in \{c^{\pm 1}, d^{\pm 1}\},~y_g \notin Z^*, \text{ and }
       \last(y_g) \in \{a^{\pm 1}\}
\\
c^{-\eta}xc^{\eta} & 
       \mbox{if } 
       x \in \{b^{\pm 1}\},~y_g \notin Z^*, \eta \in \{\pm 1\}, 
       \text{ and }
       \last(y_g) = a^\eta
\\
\last(y_g)^{-1}xa^{-\eta}\last(y_g) a^\eta & 
       \mbox{if } 
       x \in \{b^\eta, c^{\eta}, d^{\eta}\} \text{ with } \eta \in \{\pm 1\}, 
       \text{ and }
       \last(y_g) \in \{s^{\pm 1}\}
\end{cases}
\]
for all $y_g \in \nf_G$ and $x \in A$. 
In all of the cases that do not appear explicitly,
namely when either [$x \in \{s^{\pm 1}\}$]; 
[$x \in \{a^{\pm 1},~b^{\pm 1}\}$,~$y_g \in Z^*$, and 
       $\last(y_g) \in \{1, a^{\pm 1}, b^{\pm 1}\}$];
[$x \in \{c^{\pm 1}, d^{\pm 1}\}$ and $y_g \in Z^*$];
or [$x \in \{a^{\pm 1}\}$ and $y_g \notin Z^*$],
it follows from the definition of $\nf_G$ 
that either
$y_g x$ or $y_{gx} x^{-1}$ lies in $\nf_G$.  Moreover,
one can also check that
the five cases in the definition of $\sff$ are disjoint;
that is, the function $\sff$ is well-defined.

Let $\ff:\vece \ra \vecp$ denote the function
$\ff(\ega):=\path(y_g,\sff(y_g,a))$.
It follows immediately from the definition of $\ff$ and
the presentation of $G$ that
properties (F1) and (F2d) of the definition of flow function hold
for $\ff$.

In order to prove that property (F2r) holds for $\ff$,
we utilize the following function $\psi: \vec E \ra \N^3$.  Define
$\psi(e_{g,x}):=(0,0,0)$ if $e_{g,x}$ lies in the tree $\ttt$,
and if $e_{g,x}$ does not lie in $\ttt$, let
\[
\psi (e_{g, x}) := \left\lbrace \begin{array}{ll}
(0, 0, l(\suf_{\{c^{\pm 1},d^{\pm 1}\}}(y_g))) & 
                \mbox{if } x \in \{a^{\pm 1}, b^{\pm 1}\} \mbox{ and } y_g \in Z^* \\
(n_s(y_g), l(\suf_{\{a^{\pm 1}\}}(y_g)), 0) & \mbox{if }
x \in \{c^{\pm 1}, d^{\pm 1}\} \mbox{ and } y_g \notin Z^*\\
(n_s(y_g), l(\suf_{\{a^{\pm 1}\}}(y_g)), 1) & \mbox{if }
x \in \{b^{\pm 1}\} \mbox{ and } y_g \notin Z^*,
\end{array}\right.
\]
where $n_s(y_g)$ denotes the number of occurrences
of $s^{\pm 1}$ in the word $y_g$.
Let $<_{\N^3}$ denote the lexicographical ordering
on $\N^3$ obtained from the standard ordering on $\N$,
a well-founded strict partial ordering.
To prove (F2r), then, it suffices to show that whenever
$e' <_\ff e$, then $\psi(e') <_{\N^3} \psi(e)$.

Let $e=e_{g,x} \in \vece$ be any edge whose
underlying undirected edge does not lie in $\ttt$.

\noindent {\em Case 1:
Suppose that $x \in \{ a^{\pm 1}, b^{\pm 1} \}$, $y_g \in Z^*$, 
and $\last(y_g) \in \{c^{\pm 1}, d^{\pm 1}\}$.}

In this case
$\psi (e) = (0, 0, l(\suf_{\{c^{\pm 1},d^{\pm 1}\}}(y_g)))$.  
The path $\ff(e)$ contains three directed
edges: $e_1:=e_{g, \last(y_g)^{-1}}$, 
$e_2:=e_{g\last(y_g)^{-1}, x}$ and
$e_3:=e_{g\last(y_g)^{-1}x, \last(y_g)}$.  The 
edge $e_1$ lies in the tree $\ttt$, as 
$y_{g\last(y_g)^{-1}}\last(y_g)= y_g$. The edge $e_2$
either lies in $\ttt$, or else its image under $\psi$ is
$\psi(e_2) = 
(0, 0, l(\suf_{\{c^{\pm 1},d^{\pm 1}\}}(y_{g\last(y_g)^{-1}}))$. 
Since $y_{g\last(y_g)^{-1}}$ is the prefix of $y_g$
consisting of all but the last letter $\last(y_g)$
(which lies in $\{c^{\pm 1}, d^{\pm 1}\}$),
we have 
$l(\suf_{\{c^{\pm 1},d^{\pm 1}\}}(y_{g\last(y_g)^{-1}}))=
l(\suf_{\{c^{\pm 1},d^{\pm 1}\}}(y_g))-1$ and
$\psi(e_2) <_{\N^3} \psi(e)$.
To analyze the edge $e_3$, we  
decompose $y_g = uv\last(y_g)$ where $u \in \{ a^{\pm 1}, b^{\pm 1} \}^*$ 
and $v\last(y_g) \in \{ c^{\pm 1}, d^{\pm 1}\}^+$ are reduced words.
Now  the normal forms satisfy
$y_{g\last(y_g)^{-1} x}\last(y_g) = y_{ux}v\last(y_g)=
y_{g\last(y_g)^{-1} x\last(y_g)}$, and
so the edge $e_3$ also lies in the tree $\ttt$.

\noindent {\em Case 2: Suppose that 
$x \in \{ c^{\pm 1}, d^{\pm 1} \}$, $y_g \notin Z^*$,
and $\last(y_g) \in \{a^{\pm 1}\}$.}

In this case we have $\psi(e) = (n_s(y_g), l(\suf_{\{a^{\pm 1}\}}(y_g)), 0)$,
and there are three directed edges in the path $\ff(e)$.
Similar to case 1, the first of these edges,
$e_{g,\last(y_g)^{-1}}$, lies in the tree $\ttt$.
The second edge,
$e_2:=e_{g\last(y_g)^{-1}, x}$ has $\psi$ function value of 
$\psi(e_2) = 
(n_s(y_g), l(\suf_{\{a^{\pm 1}\}}(y_g))-1, 0) <_{\N^3} \psi (e)$. 
The third edge is 
$e_3:=e_{g\last(y_g)^{-1}x, \last(y_g)}$. 
Applying the rewriting system $R_G$
above shows that the normal form of the word $y_g\last(y_g)^{-1}x$ 
again contains $n_s(y_g)>0$
appearances of $s^{\pm 1}$, and therefore the edge 
$e_3$
also lies in $\ttt$.

\noindent {\em Case 3: Suppose that
$x \in \{b^{\pm 1}\}$, $y_g \notin Z^*$, 
$\eta \in \{\pm 1\}$,
and $\last(y_g)=a^\eta$.}

Now 
$\psi (e) = (n_s(y_g), l(\suf_{\{a^{\pm 1}\}}(y_g)), 1)$, and
the path $\ff(e)$ contains three directed edges:
$e_1:=e_{g, c^{-\eta}}$, $e_2:=e_{gc^{-\eta}, x}$ and
$e_3:=e_{gc^{-\eta}x, c^{\eta}}$.  Unlike the previous cases, 
none of these edges lie in $\ttt$.
The edge $e_1$ satisfies
$\psi(e_1) = (n_s(y_g), l(\suf_{\{a^{\pm 1}\}}(y_g)), 0)
<_{\N^3} \psi(e)$.

For the analysis of the other two edges, we first use the 
definition of the set $\nf_G$ to write 
out the normal form 
$y_g=ws^{\epsilon_1}a^{i_1} \cdots s^{\epsilon_n}a^{i_n}$
where $w \in \nf_H$, $n>0$, $\epsilon_j \in \{\pm 1\}$
and $i_j \in \Z$ for all $j$, and $i_n/|i_n| = \eta$.
Note that with this notation, $\psi (e) = (n, |i_n|, 1)$.

The normal form for $gc^{-\eta}$ is 
$y_{gc^{-\eta}} = 
y_{wc^{-\eta}a^{\eta}}s^{\epsilon_1}a^{i_1} \cdots 
s^{\epsilon_n}a^{i_n-\eta}$.
Therefore the image of
$e_2$ under $\psi$ satisfies
$\psi (e_2) = (n, |i_n|-1, 1)
<_{\N^3} \psi (e)$.  

Writing $x=b^\beta$ with $\beta \in \{\pm 1\}$, then
the value of $\psi(e_3)$ depends upon the sign
of the product $\beta \cdot \eta$.  
If $\beta \cdot \eta = 1$, 
then the normal form of the element $gc^{-\eta}x$ of $G$ is
$y_{h}s^{\epsilon_1}a^{i_1} \cdots s^{\epsilon_n}a^{i_n}$ where  
$h =_H wa^{i_1 + \cdots +i_n}c^{-\eta}xa^{-(i_1 + \cdots +i_n)}$.
In this subcase, then,
$\psi(e_3) = (n, |i_n|, 0) <_{\N^3} \psi(e)$.
On the other hand if $\beta \cdot \eta = -1$,
then the normal form of $gc^{-\eta} x$ is
$y_{h'}s^{\epsilon_1}a^{i_1} \cdots s^{\epsilon_n}a^{i_n-2\eta}$, 
where 
$h'=_H wa^{i_1 + \cdots +i_n}c^{-\eta}xa^{-(i_1 + \cdots +i_n)+2\eta}$.  
Since $\eta = i_n/|i_n|$, then
$|i_n - 2\eta| \le |i_n|$.
Therefore in this subcase we have
$\psi (e_3)  = (n, |i_n-2\eta|, 0)  <_{\N^3} \psi(e)$.

\noindent {\em Case 4: Suppose that
 $x \in \{ b^{\eta}, c^{\eta}, d^{\eta} \}$ with 
$\eta \in \{\pm 1\}$, and 
$\last(y_g) \in \{s^{\pm 1}\}$.}

In this case 
$\psi(e)=(n_s(y_g),0,m)$ 
(with $m \in \{0,1\}$ depending on $x$) and
$\ff(e)$ contains five 
directed edges: 
$e_1:=e_{g, \last(y_g)^{-1}}$, 
$e_2:=e_{g\last(y_g)^{-1}, x}$, 
$e_3:=e_{g\last(y_g)^{-1}x, a^{-\eta}}$, 
$e_4:=e_{g\last(y_g)^{-1}xa^{-\eta}, \last(g)}$ and 
$e_5:=e_{g\last(y_g)^{-1}xa^{-\eta}\last(g), a^{\eta}}$.
Edges $e_1$ and $e_4$ both lie in the tree $\ttt$, since
every edge labeled by $s^{\pm 1}$ lies in this tree. 
The initial vertex of the edge $e_5$ is the element
$g'=_G g\last(y_g)^{-1}xa^{-\eta}\last(g)$ of $G$;
since $g \notin H$ and $\last(y_g)^{-1}xa^{-\eta}\last(g) \in H$,
then $g' \notin H$ and $y_{g'} \notin Z^*$.
Hence the edge $e_5$ also lies in $\ttt$, 
as any edge labeled by
the letter $a^{\pm 1}$ with initial vertex having a
normal form outside of $Z^*$ lies in $\ttt$.
 
If $n_s(y_g)>1$, then
$\psi(e_2) = (n_s(y_g)-1, l(\suf_{\{a^{\pm 1}\}}(y_{g\last(y_g)^{-1}})), m)$ 
and thus 
$\psi(e_2) <_{\N^3} \psi(e)$. 
Moreover when $n_s(y_g) > 1$, the argument 
above showing that $e_5$ lies in $\ttt$ applies to show 
that $e_3$ lies in $\ttt$ as well.
On the other hand, if $n_s(y_g) = 1$, then the image
via $\psi$ for both 
$e_2$ and $e_3$ has
the form $(0,0, l(\suf_{\{c^{\pm 1},d^{\pm 1}\}}(y))$ for 
a word $y \in \nf_H$, or else is (0,0,0).
In this case, we also have both 
$\psi(e_2) <_{\N^3} \psi(e)$ 
and $\psi(e_3) <_{\N^3} \psi(e)$. 

\smallskip

These four cases show that for any 
directed edge $e'$ that is in $\ff(e)$ but not in $\ttt$, 
the inequality
$\psi(e') <_{\N^3} \psi(e)$ holds.  Hence property (F2r) holds
for the function $\ff$, and $\ff$ is a flow function.
Moreover, this flow function is bounded, with bounding constant $k=5$.

\smallskip

\noindent {\em Step 2:  Autostackable.}

The function $\sff$ defined in Step 1 of this proof
is the stacking
function associated to the bounded flow function $\ff$.
It remains for us to show that the language
$graph(\sff)$ is \sr.  As in our earlier proofs,
we proceed by expressing $graph(\sff)$ as a union of other languages,
using the piecewise definition of $\sff$ from Step 1:
\begin{eqnarray*}
graph(\sff) &=& \hspace{.16in}(\cup_{x \in A}
 ~L_x \times \{x\} \times \{x\})\\
&& \medcup (\cup_{x \in \{a^{\pm 1},b^{\pm 1}\},~z \in \{c^{\pm 1},d^{\pm 1}\}} 
       ~L_{x,z} \times \{x\}
       \times \{z^{-1}xz\}) \\
&& \medcup (\cup_{x \in \{c^{\pm 1},d^{\pm 1}\},~z \in \{a^{\pm 1}\}} 
       ~L_{x,z}' \times \{x\}
       \times \{z^{-1}xz\}) \\
&& \medcup (\cup_{x \in \{b^{\pm 1}\},~\eta \in \{\pm 1\}} 
       ~L_{x,\eta} \times \{x\}
       \times \{c^{-\eta}xc^\eta\}) \\
&& \medcup (\cup_{\eta \in \{\pm 1\},~x \in \{b^{\eta},c^{\eta},d^{\eta}\},~z \in \{s^{\pm 1}\}} 
       ~L_{\eta,x,z} \times \{x\}
       \times \{z^{-1}xa^{-\eta}za^\eta\}),
\end{eqnarray*}
where 

$L_x=\{y_g \in \nf_G \mid$ either $y_g x \in \nf_G$ or $y_{gx}x^{-1} \in \nf_G\}$,

$L_{x,z}=\{y_g \in \nf_G \mid y_g \in Z^*$ and $\last(y_g) = z\}$,

$L_{x,z}'=\{y_g \in \nf_G \mid y_g \notin Z^*$ and $\last(y_g) = z\}$,

$L_{x,\eta}=\{y_g \in \nf_G \mid y_g \notin Z^*$ and $\last(y_g) = a^\eta\}$,~~and

$L_{\eta,x,z}=\{y_g \in \nf_G \mid \last(y_g)=z\}$.

\noindent Using Lemma~\ref{lem:product} and closure of \sr\ languages
under finite unions, it suffices to show that each of
the languages $L_x$, $L_{x,z}$, $L_{x,z}'$, $L_{x,\eta}$, and
$L_{\eta,x,z}$ is regular.




We start by considering 
the set $\nf_G$.
This is the set of irreducible words
for the rewriting system $R_G$, and so can be written as
$\nf_G = A^* \setminus A^*MA^*$ where

\hspace{.1in} $M = \{xx^{-1} \mid x \in A\}~\cup~
      \{c^{\pm 1}, d^{\pm 1}\}\{a^{\pm 1}, b^{\pm 1}\}
~\cup~ 
     s^{\pm 1}(a^* \cup (a^{-1})^*) \{b^{\pm 1}, c^{\pm 1}, d^{\pm 1}\}$.

\noindent Closure of the class of regular languages under
finite unions and concatenation shows that $M$ is regular;
closure under concatenation and complement then shows
that $\nf_G$ is regular.


The language $L_x$ can be expressed as 
$L_x = (\nf_G/x) \cup (\nf_G \cap A^*x^{-1})$.  
Applying Lemma~\ref{lem:peel}
and regularity of $\nf_G$, 
then $L_x$ is a regular language.

Note that 
$L_{x,z}= \nf_G \cap Z^* \cap A^*z$,
$L_{x,z}'=(\nf_G \cap A^*z) \setminus Z^*$, 
$L_{x,\eta}=(\nf_G \cap A^*a^\eta) \setminus Z^*$,
and 
$L_{\eta,x,z}=\nf_G \cap A^*z$, and so regularity of
these languages also follows
from regularity of the normal form set $\nf_G$.
\end{proof}

Theorem~\ref{thm:stallings} yields following.

\begin{corollary}\label{cor:autostknotfp3}
There is an autostackable group that does not satisfy
the homological finiteness condition $FP_3$. 
\end{corollary}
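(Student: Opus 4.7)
The plan is to deduce the corollary from Theorem~\ref{thm:stallings} combined with the classical result of Stallings~\cite{Stallings}. Theorem~\ref{thm:stallings} exhibits an explicit autostackable structure on the specific group
\[
G = \langle a,b,c,d,s \mid [a,c]=[a,d]=[b,c]=[b,d]=1,\ [s,ab^{-1}]=[s,ac^{-1}]=[s,ad^{-1}]=1 \rangle,
\]
and Stallings~\cite{Stallings} proved that this same $G$ fails the homological finiteness condition $FP_3$. Taking $G$ itself as the witness will establish the existence claim at once, with no additional construction required.

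I expect no independent obstacle here: the substantive work has all been done in the proof of Theorem~\ref{thm:stallings}, where one had to produce a bounded flow function together with the verification that the associated stacking map has synchronously regular graph. All that remains for the corollary is the observation that the same $G$ simultaneously witnesses both properties. If pressed to spell out a proof I would simply write: let $G$ be Stallings' group as presented above; by Theorem~\ref{thm:stallings}, $G$ is autostackable, and by~\cite{Stallings}, $G$ is not of type $FP_3$, proving the claim.

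The point of stating the corollary separately is to sharpen the contrast with the subclasses of autostackable groups that motivated the definition: asynchronously automatic groups with prefix-closed normal form sets are of type $FP_\infty$ by Alonso's combing result~\cite{alonso}, and groups admitting a finite complete rewriting system are of type $FP_\infty$ by the work of Anick~\cite{Anick}, Brown~\cite{BrownRS}, Groves~\cite{GrovesRS}, and others. A secondary consequence worth noting is that the same $G$ supplies an explicit example of a group that admits a bounded complete prefix-rewriting system (equivalent, as recalled in Section~\ref{sec:intro}, to a bounded flow function) but no finite complete rewriting system, confirming that the prefix-rewriting variant is a strict generalization of the classical notion.
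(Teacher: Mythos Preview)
Your proposal is correct and matches the paper's own approach: the corollary is stated as an immediate consequence of Theorem~\ref{thm:stallings} together with Stallings' result~\cite{Stallings} that $G$ is not of type $FP_3$, with no further argument given. Your additional remarks on the contrast with automatic groups and finite complete rewriting systems, and on prefix-rewriting, also align with observations made elsewhere in the paper.
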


\begin{remark}  {\em
Recall from Section~\ref{sec:intro} that
Stallings' group $G$ cannot have a finite
complete rewriting system.  Earlier in
Section~\ref{sec:stallings} (on~p.~\pageref{rs:stallings}),
we gave an infinite complete rewriting system
for this group.  A consequence of Theorem~\ref{thm:stallings}
is that $G$ must also admit a \sr\ bounded 
\prs\ over the generating set $A$.  
For completeness, we record this system in
this remark; the \prs\ is:
\[
\begin{array}{rl}
R_G  = & \{zx^{-1}x \ra z \mid x \in A,~zx^{-1} \in \nf_G\} \\
& \cup~\{zyx \ra zxy \mid 
      x \in \{a^{\pm 1}, b^{\pm 1}\},~y \in \{c^{\pm 1}, d^{\pm 1}\},~zy \in \nf_G \cap Z^*\} \\
& \cup~\{zyx \ra zxy \mid 
      x \in \{c^{\pm 1}, d^{\pm 1}\},~y \in \{a^{\pm 1}\},~zy \in \nf_G \setminus Z^*\} \\
&  \cup~\{za^\eta x \ra  za^\eta c^{-\eta}xc^\eta \mid
      x \in \{b^{\pm 1}\},~\eta \in \{\pm 1\},~za^\eta \in \nf_G \setminus Z^*\} \\
&  \cup~\{zs^\epsilon x^\eta \ra  zx^{\eta}a^{-\eta} s^\epsilon a^\eta \mid
      x \in \{b,c,d\},~\epsilon,\eta \in \{\pm 1\},~zs^\epsilon \in \nf_G\}.
\end{array}
\]
}\end{remark}




\end{document}